\newtheorem{teo}{Theorem}
\newtheorem{lema}[teo]{Lemma}
\newtheorem{cor}[teo]{Corollary}
\newtheorem{prop}[teo]{Proposition}
\newtheorem{defi}[teo]{Definition}
\newtheoremstyle{mytheoremstyle} 
    {\topsep}                    
    {\topsep}                    
    {}                   
    {}                           
    {\scshape}                   
    {.}                          
    {.5em}                       
    {}  
\theoremstyle{mytheoremstyle} \newtheorem{nota}{Remark}
\theoremstyle{mytheoremstyle} \newtheorem{exemplo}{Example}
\numberwithin{equation}{section}
\newcommand{\real}{\mathbb{R}}
\newcommand{\complex}{\mathbb{C}}
\newcommand{\nat}{\mathbb{N}}
\newcommand \ben {\begin{equation}}
\newcommand \een {\end{equation}}
\newcommand \be {\begin{equation*}}
\newcommand \ee {\end{equation*}}
\newcommand \bi {\begin{itemize}}
\newcommand \ei {\end{itemize}}
\newcommand{\ubar}[1]{\underaccent{\bar}{#1}}
\date{}
\title{\textbf{Ground-states for systems of \textit{M} coupled semilinear Schrödinger equations with attraction-repulsion effects: characterization and perturbation results}}
\author{Simão Correia\\ \textit{CMAF-UL and FCUL, Av.\ Prof.\ Gama Pinto 2,}\\
\textit{1649-003 Lisboa, Portugal}\\
\textit{Email adress: sfcorreia@fc.ul.pt}}
\begin{document}
\maketitle

\begin{abstract}
We focus on the study of ground-states for the system of $M$ coupled semilinear Schrödinger equations with power-type nonlinearities and couplings. We extend the characterization result in \cite{simao} to the case where both attraction and repulsion are present and cannot be studied separately. Furthermore, we derive some perturbation and classification results to study the general system where components may be out of phase. In particular, we present several conditions to the existence of nontrivial ground-states. 
\vskip10pt
\noindent\textbf{Keywords}: Coupled semilinear Schrödinger equations; ground-states; nontrivial solutions; perturbations.
\vskip10pt
\noindent\textbf{AMS Subject Classification 2010}: 35Q55, 35J47, 35E99.
\end{abstract}

\begin{section}{Introduction}
In this work, we consider the system of $M$ coupled semilinear Schrödinger equations
\ben
i(v_i)_t + \Delta v_i + \sum_{j=1}^M k_{ij}|v_j|^{p+1}|v_i|^{p-1}v_i=0,\quad i=1,...,M
\een
where $V=(v_1,...,v_M):\real^+\times\Omega\to\real^M$, $\Omega\subset\real^N$ open with smooth boundary, $k_{ij}\in\real$, $k_{ij}=k_{ji}$, and $0<p<2/(N-2)^+$ (we use the convention $2/(N-2)^+=+\infty$, if $N=1,2$, and $2/(N-2)^+=2/(N-2)$, if $N\ge 3$). Given $1\le i\neq j\le M$, if $k_{ij}\ge 0$, one says that the coupling between the components $v_i$ and $v_j$ is attractive; if $k_{ij}< 0$, it is repulsive. 

When we look for nontrivial periodic solutions of the form $V=e^{i\omega t}U$, with $U=(u_1,...,u_M)\in (H_0^1(\Omega))^M$ (called bound-states), we are led to the study of the system
\begin{equation}\tag{M-NLS}\label{BS}
 \Delta u_i - \omega u_i + \sum_{j=1}^M k_{ij}|u_j|^{p+1}|u_i|^{p-1}u_i=0 \quad i=1,...,M.
\end{equation}

On the other hand, one may also consider periodic solutions where the time-frequency is not necessarily the same for each component (one then says that the components are out of phase). These solutions are of the form $V=(e^{i\omega_1t}u_1,..., e^{i\omega_Mt}u_M)$ and the stationary system is
\begin{equation}\tag{M-NLS'}\label{BSomega}
 \Delta u_i - \omega_i u_i + \sum_{j=1}^M k_{ij}|u_j|^{p+1}|u_i|^{p-1}u_i=0 \quad i=1,...,M.
\end{equation}

Notice that, if $M=1$ and $\Omega=\real^N$, the presence of $\omega>0$ may be eliminated by a suitable scaling. However, in any other case, such a procedure is no longer possible.

In any case, for both physical and mathematical reasons, one is interested in bound-states which have minimal action among all bound-states, the so-called ground-states. The set of such solutions is noted $G$. For $\Omega=\real^N$, in the scalar case, one may prove that there is a unique ground-state $Q$ (modulo translations and rotations, see \cite{cazenave}). For a general $\Omega$, the problem has not been completely solved. However, it is known, for example, that there exists a ground-state if
\ben\label{deflambda1}
\omega>-\lambda_1(\Omega),\  \lambda_1(\Omega)=\left\{\begin{array}{cc}
\mbox{ first eigenvalue of } -\Delta \mbox{ on } H^1_0(\Omega),& \Omega \mbox{ bounded}\\ 0 & \Omega \mbox{ infinite parallelipiped}

\end{array}\right..
\een

The vector case is much more complex. The existence of ground-states for system \eqref{BSomega} on $\Omega=\real^N$ has been proven under the sufficient and necessary condition
\ben\label{condicaoexistencia}
\exists U=(u_1,...,u_M)\in (H^1(\real^N))^M:\ \sum_{i,j=1} k_{ij}\int |u_i|^{p+1}|u_j|^{p+1} >0
\een
using a suitable variational formulation. We note that the result is still true for any $\Omega$. In fact, one proves that the set of ground-states is the set of minimizers of
\ben
\inf\left\{\int \sum_{i=1}^M \omega_i|u_i|^2 + |\nabla u_i|^2: \sum_{i,j=1} k_{ij}\int |u_i|^{p+1}|u_j|^{p+1} =\lambda \right\},
\een
for a precise and explicit $\lambda$. To prove existence of minimizers, the main difficulty is the strong compactness of the minimizing sequence in $L^{2p+2}$. In $\Omega$ bounded, this is trivial, since one has the compact injection $H_0^1(\Omega)\hookrightarrow L^{2p+2}(\Omega)$. For $\Omega=\real^N$, one uses the concentration-compactness principle and proves the compactness alternative. For $\Omega$ an infinite parallelipiped, one simply extends the minimizing sequence to $\real^N$ by $0$ and apply the technique for the whole space. Since the existence of ground-states for general $\Omega$ is an open problem, we shall make the following assumption
\begin{equation}\label{exist}\tag{Exist}
\mbox{"The set of all ground-states for (M-NLS) over } \Omega,\ G \mbox{, is nonempty."}
\end{equation}

One then may pose a number of questions: is there a unique positive ground-state? Does a ground-state have all components different from $0$ (called nontrivial ground-states)? Can we obtain a simple characterization of the family of ground-states? Are the solutions positive and radially decreasing?

Regarding system \eqref{BS}, for $\Omega=\real^N$, a recent work (\cite{simao}) has answered to these questions for a very large family of matrices $K=(k_{ij})_{1\le i, j\le M}$. Essentially, if one may group the components in such a way that two components attract each other if and only if they belong to the same group, one may answer all questions above in a satisfactory fashion. One may also prove that, in the case where all components attract each other, the result is extendible to any $\Omega$. If this grouping hypothesis fails, the situation becomes much more difficult. The reason is that two components may repel each other directly but, by transitivity, they also attract each other (see figure 1). Then the balance between these forces is not clear and the analysis is not straightfoward.
\begin{figure}[h]
\centering
\includegraphics[width=7cm, height=3cm]{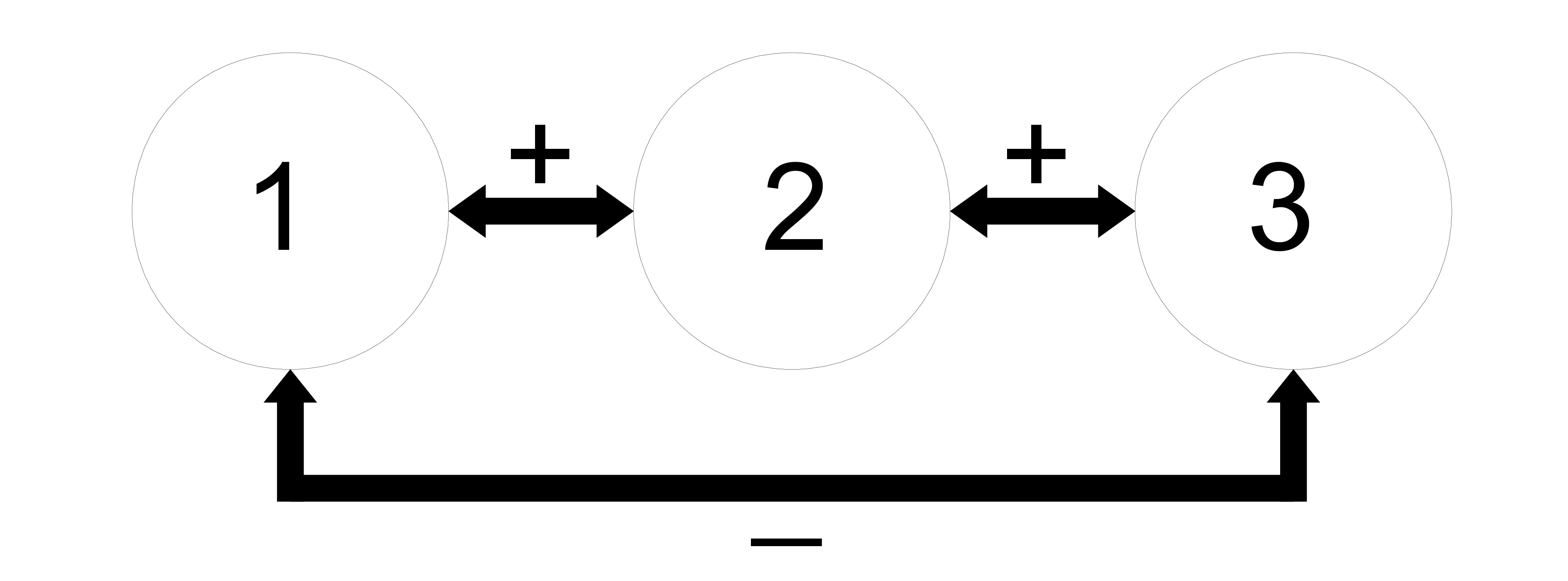}
\caption{The simplest balanced system: the signs indicate wether the components attract or repel each other. Though components $1$ and $3$ repel each other, they are both attracted to component $2$. This case was studied for the first time in \cite{linwei}.}
\end{figure}

For the general system \eqref{BSomega}, it is impossible in general to obtain a characterization similar to the one for \eqref{BS}. Results obtained so far consider mostly the case $M=2$, $\Omega=\real^N$. For this case, the behaviour of the system with respect to the parameters is very well understood. Uniqueness of positive radial solution has been considered in \cite{mazhao} and \cite{weiyao}, mostly through a careful analysis of the system of ODE's that one obtains when considering radial solutions. The question that has attracted more attention in the past few years is the existence of nontrivial ground-states. We advise the reader to check \cite{maiamontefuscopellacci}, \cite{liuliuchang}, \cite{mandel} (and references therein).

In this work, we shall consider both systems \eqref{BS} and \eqref{BSomega}. For system (M-NLS), we complete the work started in \cite{simao} and obtain the characterization of ground-states regardless of coupling coefficients. We note by $G^+$ the set of nontrivial ground-states.
\begin{teo}\label{caract}
Consider system (M-NLS) and suppose \eqref{exist}. Define $f:(\real_0^+)^M\to \real$,
\ben
f(X)=\sum_{i,j=1}^M k_{ij}x_i^{p+1}x_j^{p+1}
\een
and let $\mathcal{X}\subset (\real_0^+)^M$ be the set of solutions of 
\ben
f(X_0)=f_{max}:=\max_{|X|=1}f(X),\quad |X_0|=1.
\een
Then $U\in G$ if and only if there exist $a_i\in\complex, 1\le i\le M$, such that
$(f_{max})^{1/2p}(|a_1|,...,|a_M|)\in\mathcal{X}$ and $u_0$ ground-state of
\ben\label{equ_0}
\Delta u - \omega u +|u|^{2p}u=0\mbox{ on } \Omega
\een
such that
\ben\label{formula}
U=(a_iu_0)_{1\le i\le M}.
\een
In particular, $G^+\neq\emptyset$ if and only if there exists $X\in \mathcal{X}$ such that $X_i\neq 0,\ i=1,...,M$. Moreover $G=G^+$ if and only if all elements of $\mathcal{X}$ have no zero components.
\end{teo}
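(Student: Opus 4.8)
The plan is to reduce the statement to the scalar theory for \eqref{equ_0}. Write $T(U):=\sum_{i=1}^{M}\int_\Omega(|\nabla u_i|^2+\omega|u_i|^2)$ and $P(U):=\sum_{i,j=1}^{M}k_{ij}\int_\Omega|u_i|^{p+1}|u_j|^{p+1}$, so that the action of \eqref{BS} is $S(U)=\tfrac12 T(U)-\tfrac1{2p+2}P(U)$; pairing the $i$-th equation with $u_i$ and summing gives the Nehari identity $T(U)=P(U)$, whence $S(U)=\tfrac{p}{2p+2}T(U)$ on every bound-state and $P(U)>0$ on every nontrivial one. Thus $G$ is the set of bound-states minimising $T$. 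I will use that \eqref{equ_0} admits on $\Omega$ a ground-state $u_0$ realising the best constant $m_0:=\inf\{\int_\Omega|\nabla w|^2+\omega w^2:\|w\|_{L^{2p+2}}=1\}>0$ and, by its own Nehari identity, satisfying $\|u_0\|_{L^{2p+2}}^{2p}=m_0$; I also record that \eqref{exist} forces $f_{max}>0$ (equivalently, $f_{max}>0$ is the solvability condition \eqref{condicaoexistencia}).

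\textbf{Key reduction.} The crux — and the point where attraction and repulsion are handled together — is to compare $U$ with the single function $w:=(\sum_{i=1}^M|u_i|^2)^{1/2}\in H_0^1(\Omega)$. For the gradient term I would combine the diamagnetic inequality $\bigl|\nabla|u_i|\bigr|\le|\nabla u_i|$, the identity $w\nabla w=\sum_i|u_i|\nabla|u_i|$ (a.e.\ on $\{w>0\}$) and Cauchy--Schwarz to get, pointwise, $|\nabla w|\le(\sum_i\bigl|\nabla|u_i|\bigr|^2)^{1/2}\le(\sum_i|\nabla u_i|^2)^{1/2}$; since $\sum_i|u_i|^2=w^2$ exactly, this yields $T(U)\ge\int_\Omega|\nabla w|^2+\omega w^2$. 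For the coupling term, observe that at a.e.\ $x$ the integrand of $P$ equals $f(|u_1(x)|,\dots,|u_M(x)|)$ and that $f(Y)\le f_{max}|Y|^{2p+2}$ for all $Y\in(\real_0^+)^M$ by homogeneity and the definition of $f_{max}$; integrating gives $P(U)\le f_{max}\|w\|_{L^{2p+2}}^{2p+2}$. Together with the sharp scalar inequality $\int_\Omega|\nabla w|^2+\omega w^2\ge m_0\|w\|_{L^{2p+2}}^2$ this gives, for every $U$ with $P(U)>0$,
\[
T(U)\ \ge\ m_0\,f_{max}^{-1/(p+1)}\,P(U)^{1/(p+1)}=:\mathcal J\,P(U)^{1/(p+1)},
\]
hence, using $P=T$ on bound-states, $T(U)\ge\mathcal J^{(p+1)/p}$ there.

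\textbf{Identifying $G$.} For the ``if'' direction I would check directly that any $U=(a_iu_0)_i$ with $X:=(f_{max})^{1/2p}(|a_i|)_i\in\mathcal X$ solves \eqref{BS}: the Karush--Kuhn--Tucker relations for $X$ — a maximiser of $f$ on the unit sphere of $(\real_0^+)^M$, critical for the restriction of $f$ to the face $\{x_i=0,\,i\notin\supp X\}$ — contracted with $X$, give $X_i^{p-1}\sum_jk_{ij}X_j^{p+1}=f_{max}$ for $i\in\supp X$, which after substitution becomes $|a_i|^{p-1}\sum_jk_{ij}|a_j|^{p+1}=1$ whenever $a_i\ne0$; plugging $(a_iu_0)$ into \eqref{BS} and using $\Delta u_0-\omega u_0=-|u_0|^{2p}u_0$ then verifies it. Such $U$ has $T(U)=|a|^2\|u_0\|_{L^{2p+2}}^{2p+2}=f_{max}^{-1/p}m_0^{(p+1)/p}=\mathcal J^{(p+1)/p}$, the minimum permitted by the previous paragraph, so $U\in G$. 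Conversely, if $U\in G$ then $T(U)=P(U)=\mathcal J^{(p+1)/p}$ and every inequality above is an equality: equality in the diamagnetic/Cauchy--Schwarz step forces $u_i=e^{i\theta_i}|u_i|$ with constant phases and (with continuity of the components and connectedness of $\Omega$) all $|u_i|$ proportional to a common $g\ge0$; equality in $\int|\nabla w|^2+\omega w^2\ge m_0\|w\|^2$ forces $w$ to be a positive multiple of a ground-state $u_0$ of \eqref{equ_0} (the minimisers of the scalar quotient being exactly the positive multiples of $u_0$), so $g$ is a multiple of $u_0$; and equality in $P(U)\le f_{max}\|w\|^{2p+2}$ forces the direction of $(|u_i|)_i$ into $\mathcal X$. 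Hence $U=(a_iu_0)_i$ with $(|a_i|)_i$ along $\mathcal X$, and $T(U)=|a|^2m_0^{(p+1)/p}=\mathcal J^{(p+1)/p}$ pins $|a|=f_{max}^{-1/2p}$, i.e.\ $(f_{max})^{1/2p}(|a_i|)_i\in\mathcal X$. The three ``in particular'' claims follow at once since $\{i:u_i\ne0\}=\{i:a_i\ne0\}=\supp X$.

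\textbf{Expected main obstacle.} The inequality $T(U)\ge\mathcal J P(U)^{1/(p+1)}$ is not itself the difficulty: the pointwise estimate $f(Y)\le f_{max}|Y|^{2p+2}$ disposes of repulsion for free, precisely because it never compares $\int|u_i|^{p+1}|u_j|^{p+1}$ with $\|u_i\|_{L^{2p+2}}^{p+1}\|u_j\|_{L^{2p+2}}^{p+1}$ — where Cauchy--Schwarz points the wrong way when $k_{ij}<0$, the obstruction that forced the grouping hypothesis in \cite{simao}. The delicate part I expect to be the equality analysis in the ``only if'' direction: checking that the three equality cases are mutually compatible, handling the constant phases and the scaling/translation freedom of $u_0$, and — since elements of $\mathcal X$ may have vanishing components — writing the stationarity conditions for $f$ on the right coordinate face so that $X_i^{p-1}\sum_jk_{ij}X_j^{p+1}=f_{max}$ is claimed only on $\supp X$. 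The small but genuinely necessary preliminary is the equivalence $f_{max}>0\Leftrightarrow$ \eqref{condicaoexistencia}, without which $\mathcal X$ and the normalisation $(f_{max})^{1/2p}$ are meaningless.
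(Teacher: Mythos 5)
Your proposal is correct and follows essentially the same strategy as the paper: compare $U$ with the scalar problem through $w=(\sum_i|u_i|^2)^{1/2}$, bounding the coupling term pointwise by $f(Y)\le f_{max}|Y|^{2p+2}$ and the gradient term by Cauchy--Schwarz, then extract the structure of ground-states from the equality cases. The only differences are in execution (best-constant formulation instead of comparison with the explicit competitor $X_0w$; a direct KKT verification of the ``if'' direction; constancy of the direction vector deduced from the Cauchy--Schwarz equality case rather than from the PDE $2\nabla u\cdot\nabla X_i+u\Delta X_i=0$ and integration by parts), and these all go through.
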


\begin{nota}
The fact that the constants appearing in \eqref{formula} do not depend on $\Omega$ is a remarkable property. As a consequence, the question of wether $G^+$ is empty or not is also independent on $\Omega$. For example, we know that, for $M=3$, $k_{12}>0$, $k_{13},k_{23}<0$ and $\Omega=\real^N$, either $a_1=a_2=0$ or $a_3=0$ (see \cite{simao}). This has been proven by arguing that translating the third component of a ground-state to the infinite decreases the action, which is not an available argument for $\Omega$ bounded. Now, however, we see that the result is also true for any $\Omega$ for which \eqref{exist} holds, in particular over bounded domains.
\end{nota}

\begin{nota}
In \cite{linwei}, it is considered the case $\Omega=\real^N$, $M=3$, $p=1$, $k_{12},k_{23}>0$ and $k_{13}<0$. They prove that if $k_{ii}=1$, $i=1,2,3$, $k_{12},k_{23} \approx \delta^2$ and $k_{13}\approx -\delta$, $\delta>0$ small, any nontrivial ground-state is not radial. This implies that such a ground-state cannot be of the form $U=(a_iQ)_{1\le i\le 3}$, where $Q$ is the unique ground-state for the scalar equation, since $Q$ is radial. Together with the above theorem, one sees that there are no nontrivial ground-states for this system.
We claim that it is possible to obtain such a conclusion in a more general setting. In \cite{simao}, it is proved that, for $p\ge 1$ and $k_{ij}>0, \forall i, j$, there exists $\epsilon>0$ such that, if $\max_{i\neq j} |k_{ij}|<\epsilon$, there are no nontrivial ground-states. To prove this, one uses the implicit function theorem to determine the constants of the characterization formula as a perturbation of the system where $k_{ij}=0, i\neq j$. Afterwards, the computation of the action proves that the ground-state is semitrivial. We now notice that this proof still works without the restriction $k_{ij}>0$. In fact, this restriction was made only because the characterization result available needed such an hypothesis.
\end{nota}

For \eqref{BSomega}, since a reduction to the scalar case is impossible, the main questions are about existence of nontrivial ground-states (one might also discuss uniqueness, but that is a difficult matter even for the (M-NLS) system, where we have a complete characterization). Our results focus on two approaches: the first considers perturbation of the parameters of the system, while the second considers a real-valued function on the parameters whose properties determine the emptiness of $G^+$. We now explain the main ideas.

\begin{center}
\textbf{Approach 1: Perturbation theory}
\end{center}

First of all, a scaling reduces any (M-NLS') system to the case $\omega\ge 1$. Given a nonempty symmetric subset $P$ of $\{1,...,M\}^2$, $\beta\in\real$ and $\eta>0$, consider, for $i=1,...,M$,
\ben
\Delta u_i - (1+\eta(\omega_i-1)) u_i + \sum_{(i,j)\notin P} k_{ij}|u_j|^{p+1}|u_i|^{p-1}u_i + \sum_{(i,j)\in P} \beta k_{ij}|u_j|^{p+1}|u_i|^{p-1}u_i =0
\een
For the sake of simplicity, suppose that $k_{ij}>0, \forall (i,j)\in P$. If one considers the ground-state action level, $\mathcal{I}_\beta^\eta$, and the semitrivial ground-state action level, $(\mathcal{I}_\beta^\eta)^{sem}$, then $\mathcal{I}_\beta^\eta<(\mathcal{I}_\beta^\eta)^{sem}$ is equivalent to $G=G^+$. The continuity of these action levels with respect to $\beta$ and $\eta$ leads to perturbation results: if, for some $\beta_0, \eta_0$, one proves that the ground-state action level is strictly lower than the semitrivial action level, then the same inequality is valid for $\beta,\eta$ close to $\beta_0,\eta_0$. We exemplify such an argument with two corollaries:
\begin{cor}\label{semitrivialinfinito}
Consider system (M-NLS').
\begin{enumerate}
\item If $M=2$ and $0<k_{11}, k_{22}\ll k_{12}$, $G=G^+$;
\item For $M\ge 3$, if $k_{ii}=-1, \ \forall i$ and $k_{ij}=\beta, \ \forall i\neq j$, there exists $\epsilon>0$ such that, if
\ben
\frac{2}{M-1}<\beta<\frac{2}{M-2} + \epsilon,
\een
then $G=G^+$.
\end{enumerate}
\end{cor}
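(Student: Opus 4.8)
The plan is to derive both statements from the principle recalled just above: since $G=G^+$ is equivalent to the strict inequality $\mathcal I_\beta^\eta<(\mathcal I_\beta^\eta)^{sem}$, and both levels depend continuously on $(\beta,\eta)$, it suffices to exhibit, in each case, a parameter value at which the strict inequality holds for a reason that is stable under perturbation. In both cases I would certify the inequality in the strongest way possible, namely by forcing $(\mathcal I_\beta^\eta)^{sem}=+\infty$ --- i.e.\ by showing that no semitrivial bound-state exists at all --- and then by controlling how this level is born, and stays large, as the parameters move.

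For statement (1) I would take $P=\{(1,1),(2,2)\}$, keep $k_{12}>0$ fixed, and let $\beta\to 0^+$; the hypothesis ``$0<k_{11},k_{22}\ll k_{12}$'' is precisely ``$\beta$ small and positive''. At $\beta=0$ a semitrivial bound-state with, say, $u_2\equiv 0$ must solve $\Delta u_1-\omega_1 u_1=0$ in $H^1_0(\Omega)$ with $\omega_1\ge 1$, and testing against $u_1$ gives $u_1\equiv 0$; so there are no semitrivial bound-states, $(\mathcal I_0^\eta)^{sem}=+\infty$, while $k_{12}>0$ makes \eqref{condicaoexistencia} hold (equivalently $f_{max}>0$), so $\mathcal I_0^\eta<+\infty$. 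For $\beta>0$ these levels become finite, but the scaling $u_i\mapsto(\beta k_{ii})^{-1/(2p)}w$ identifies each with $(\beta k_{ii})^{-1/p}$ times the action of the scalar ground-state of $\Delta w-\omega_i w+|w|^{2p}w=0$ on $\Omega$; hence $(\mathcal I_\beta^\eta)^{sem}\to+\infty$ as $\beta\to 0^+$, whereas $\mathcal I_\beta^\eta$ stays bounded (it converges to the finite value $\mathcal I_0^\eta$ by continuity). Therefore $\mathcal I_\beta^\eta<(\mathcal I_\beta^\eta)^{sem}$ for $\beta$ small, i.e.\ $G=G^+$.

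For statement (2) the self-couplings are $k_{ii}=-1$ and $\beta$ (the common off-diagonal value) is the parameter. The key point is that, writing $s_i=x_i^{p+1}$, the restriction of $f$ to any $m$-dimensional coordinate subspace equals $\beta\big(\textstyle\sum_i s_i\big)^2-(1+\beta)\sum_i s_i^2$, which is non-positive on the whole positive orthant exactly up to an explicit threshold in $\beta$ that decreases with $m$; one checks that $\beta\le\frac{2}{M-2}$ makes every block of size $m\le M-1$ subcritical. For such $\beta$, testing an $m$-component bound-state against itself and summing the equations yields the identity $\sum_i\int\big(|\nabla u_i|^2+\omega_i|u_i|^2\big)=\sum_{i,j}k_{ij}\int|u_i|^{p+1}|u_j|^{p+1}$, whose right-hand side is $\le 0$ by the previous line, forcing the state to be trivial; so no semitrivial bound-state exists, $(\mathcal I_\beta^\eta)^{sem}=+\infty$, and since $\beta>\frac{2}{M-1}$ guarantees $f_{max}>0$ (hence a nontrivial ground-state), $G=G^+$ for $\frac{2}{M-1}<\beta\le\frac{2}{M-2}$. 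Finally, just above $\frac{2}{M-2}$ the $(M-1)$-dimensional blocks turn supercritical and semitrivial bound-states appear, but the relevant maximum of $f$ over such a block tends to $0^+$ as $\beta\downarrow\frac{2}{M-2}$, so the attached action level blows up while $\mathcal I_\beta^\eta$ stays bounded; the strict inequality therefore survives on $\big(\frac{2}{M-2},\frac{2}{M-2}+\epsilon\big)$ for some $\epsilon>0$, which gives the claimed range.

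The scalings and the sign analysis of the homogeneous forms above are routine. The load-bearing ingredient, which I would quote from the continuity results established just before the corollary, is the continuity of $\mathcal I_\beta^\eta$ together with the lower semicontinuity --- and the blow-up at criticality --- of $(\mathcal I_\beta^\eta)^{sem}$. The step I expect to be the real obstacle is precisely this endpoint behaviour: one must show that when a subsystem passes from subcritical to supercritical its ground-state action grows to $+\infty$ fast enough to stay above the bounded full level, and that no strictly smaller subsystem goes critical first --- which is what pins down the ordering of the two thresholds $\frac{2}{M-1}$ and $\frac{2}{M-2}$.
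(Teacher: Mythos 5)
Your argument is, in substance, the paper's own: the paper defines the existence thresholds $\ubar{\beta}$ and $\ubar{\beta}^{sem}$, observes that for $\ubar{\beta}<\beta\le\ubar{\beta}^{sem}$ there are no semitrivial competitors at all, and proves a lemma stating that whenever $\ubar{\beta}<\ubar{\beta}^{sem}$ the inequality $\mathcal{I}_\beta<\mathcal{I}_\beta^{sem}$ persists for $\beta$ slightly beyond $\ubar{\beta}^{sem}$, because $\mathcal{I}_\beta^{sem}$ is continuous in $\overline{\real}$ (hence blows up at its threshold) while $\mathcal{I}_\beta$ stays bounded via a fixed nontrivial test function. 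Your explicit scaling in part (1) and your degeneration-of-$f$ argument in part (2) are concrete instantiations of exactly that continuity/blow-up mechanism, so the route is the same, only packaged by hand rather than through the continuity proposition. One bookkeeping point you should reconcile: the form you display, $\beta\bigl(\sum_i s_i\bigr)^2-(1+\beta)\sum_i s_i^2$, which is the correct expansion of $\sum_{i,j}k_{ij}s_is_j$ when the double sum counts each off-diagonal pair twice, is non-positive on an $m$-dimensional block precisely for $\beta\le 1/(m-1)$, not $2/(m-1)$; the thresholds $2/(M-1)$ and $2/(M-2)$ in the statement correspond to counting each unordered pair once. The paper asserts the same numbers without computation, so this is an ambiguity you inherit rather than create, but as written your displayed form contradicts the threshold you then claim for it, and the choice of convention changes the numerical interval in the corollary.
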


\begin{cor}\label{diagonalpequena}
Consider system (M-NLS'), $M\ge3$. Suppose that $\Omega=\real^N$, $p\le 1$, $\omega_1\le\omega_2\le ...\le \omega_M$ and $k_{ij}=b>0,\ \forall i\neq j$. Assume that
\ben\label{hipoteseomegasdiferentes}
\frac{M-1}{(M-2)^{1/p}}>\frac{M}{(M-1)^{1/p}}\left(\frac{\omega_M}{\omega_1}\right)^{\frac{2-p(N-2)}{2p}}.
\een
Then there exists $\delta>0$ such that, if $\max_i |k_{ii}|< \delta b$, $G=G^+$.
\end{cor}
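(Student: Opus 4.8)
The strategy is the one outlined in Approach~1: the claim $G=G^+$ is equivalent to the strict inequality $\mathcal{I}<\mathcal{I}^{sem}$ between the ground-state action level and the semitrivial action level, so I would establish this inequality at the "reference'' system $k_{ii}=0$ and then invoke the continuity of both levels in the diagonal entries $(k_{ii})$ to propagate it to a neighbourhood $\max_i|k_{ii}|<\delta b$. Thus the first step is to fix $b>0$, set all $k_{ii}=0$, and compute (or estimate) $\mathcal{I}$ and $\mathcal{I}^{sem}$ for the resulting system explicitly. Since the off-diagonal couplings are all equal to $b$, the quadratic form $f(X)=b\sum_{i\neq j}x_i^{p+1}x_j^{p+1}$ on the sphere and the associated minimization problem
\ben
\inf\Big\{\textstyle\sum_{i=1}^M\int \omega_i|u_i|^2+|\nabla u_i|^2:\ b\!\!\sum_{i\neq j}\!\int|u_i|^{p+1}|u_j|^{p+1}=\lambda\Big\}
\een
can be analysed by the scaling $u_i=c_i\,u_0(\,\cdot/\sqrt{\omega_i}\,)$ reducing each component to the scalar ground-state $u_0$ of \eqref{equ_0} with $\omega=1$; this converts the functional into a finite-dimensional optimization over the amplitudes $(c_i)$ and the scaling factors $\omega_i$, with the scalar ground-state energy $\|u_0\|$-type constants factored out. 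Here $p\le 1$ and $\Omega=\real^N$ are used to guarantee that the scalar problem behaves well and that the exponent $\frac{2-p(N-2)}{2p}$ in \eqref{hipoteseomegasdiferentes} is exactly the power with which $\omega_i$ enters after scaling.

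The heart of the matter is then a comparison of two finite-dimensional quantities. For the semitrivial level one is forced to drop at least one component; because $\omega_1\le\cdots\le\omega_M$, the cheapest semitrivial configuration uses the $M-1$ components with the smallest frequencies $\omega_1,\dots,\omega_{M-1}$, and optimizing the amplitudes over these $M-1$ equal-coupling components produces a factor proportional to $(M-2)^{-1/p}$ (the number of interacting pairs per component being $M-2$) weighted by $\prod_{i\le M-1}\omega_i^{-\alpha}$ for the appropriate $\alpha$. For the full ground-state level all $M$ components are present, giving the analogous factor with $(M-1)^{-1/p}$ but now paying the largest frequency $\omega_M$ as well. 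Writing out $\mathcal{I}<\mathcal{I}^{sem}$ and cancelling the common scalar constants, the inequality collapses — after keeping only the worst-case frequency ratio $\omega_M/\omega_1$ — precisely to hypothesis \eqref{hipoteseomegasdiferentes}:
\ben
\frac{M-1}{(M-2)^{1/p}}>\frac{M}{(M-1)^{1/p}}\Big(\frac{\omega_M}{\omega_1}\Big)^{\frac{2-p(N-2)}{2p}}.
\een
So at the reference system $k_{ii}=0$ we obtain the strict inequality $\mathcal{I}<\mathcal{I}^{sem}$, hence $G=G^+$ there.

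Finally I would upgrade this to the perturbed system. Both $\mathcal{I}$ and $\mathcal{I}^{sem}$ are infima of action levels depending continuously (indeed Lipschitz on bounded sets) on the coupling matrix $K$, since adding the diagonal terms $\sum_i k_{ii}|u_i|^{2p+2}$ perturbs the functional by a quantity controlled by $(\max_i|k_{ii}|)\sum_i\|u_i\|_{L^{2p+2}}^{2p+2}$, which is uniformly bounded along the relevant (normalized) minimizing families. Therefore the strict gap $\mathcal{I}<\mathcal{I}^{sem}$ at $k_{ii}=0$ persists for $\max_i|k_{ii}|<\delta b$ with $\delta>0$ small, and by the equivalence recalled above this gives $G=G^+$, completing the proof. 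The main obstacle I anticipate is not the perturbation step but the bookkeeping in the reference computation: one must verify that the optimal amplitude vectors are genuinely the symmetric ones (all equal on the active components) and that the optimal semitrivial configuration really is the one discarding $\omega_M$ — i.e. that the finite-dimensional optimization has no spurious asymmetric minimizer — which is where the restriction $p\le1$ (convexity/concavity of $t\mapsto t^{p+1}$-type terms) does the real work.
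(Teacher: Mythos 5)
Your overall architecture is the right one and matches the paper's: compute the ground-state and semitrivial action levels for the reference system $k_{ii}=0$, show the strict gap $\mathcal{I}<\mathcal{I}^{sem}$ there, and propagate it by continuity of the action levels in the coupling matrix (after normalizing $b=1$). Your identification of where $p\le 1$ enters is also correct: it is used to show that the finite-dimensional system $a_i^{p-1}\sum_{j\neq i}a_j^{p+1}=1$ forces all amplitudes equal, so the only nontrivial candidate is the symmetric one.

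However, there is a genuine gap in your central computation. You propose to handle the distinct frequencies $\omega_1\le\cdots\le\omega_M$ by the ansatz $u_i=c_i\,u_0(\cdot/\sqrt{\omega_i})$ and then optimize over the amplitudes and scales as a finite-dimensional problem. This does not work: when the components live at different spatial scales, the coupling integrals $\int|u_i|^{p+1}|u_j|^{p+1}$ do not factorize into scalar constants times powers of $\omega_i$, so the functional does \emph{not} reduce to a finite-dimensional optimization, and the characterization theorem (Theorem \ref{caract}) — which is what justifies the reduction to proportional copies of a single scalar ground-state — is only available for (M-NLS), i.e.\ when all frequencies coincide. The paper closes this gap with the monotonicity lemmas: $\mathcal{I}(\omega_1,\dots,\omega_M)\le\mathcal{I}(\omega_M,\dots,\omega_M)$ and $\mathcal{I}^{sem}(\omega_1,\dots,\omega_M)\ge\mathcal{I}^{sem}(\omega_1,\dots,\omega_1)$, which sandwich both levels between \emph{equal-frequency} levels; only then does the scaling $\mathcal{I}(\omega,\dots,\omega)=\omega^{\frac{2-p(N-2)}{2p}}\mathcal{I}(1,\dots,1)$ apply, and the comparison collapses to hypothesis \eqref{hipoteseomegasdiferentes} with the ratio $\omega_M/\omega_1$. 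Your "worst-case frequency ratio" remark shows you sense the right answer, but the mechanism you give to reach it (and in particular the claimed weight $\prod_{i\le M-1}\omega_i^{-\alpha}$ for the semitrivial level) is not a valid step; you need the monotone comparison, not a direct computation at distinct frequencies.
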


Note that the first part of corollary \ref{semitrivialinfinito} is already known (see \cite{mandel} and \cite{maiamontefuscopellacci}). Also, in corollary \ref{diagonalpequena}, if $\omega_1=\omega_M$, the result is a particular case of \cite{liuwang} and \cite{oliveiratavares}. Even so, we prove these results for two reasons: first, the proof is very simple when one looks from this pertubative perspective; second, the approach is rather different in nature and it deals only with continuity properties, which may have a greater capacity of generalization to other systems.

Regarding corollary \ref{diagonalpequena}, a comment is in need: it might be expected that the restriction $p\le 1$ would be technical. In fact, for $M=2$, the result is valid for any $p>0$. By contrast, we prove

\begin{prop}\label{p=3}
Consider system (3-NLS), with $p=3$, $k_{ii}=0, \forall i$ and $k_{ij}=1, \forall i\neq j$. Then $G^+=\emptyset$.
\end{prop}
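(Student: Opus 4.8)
The plan is to apply Theorem~\ref{caract}, which is available here: all off-diagonal couplings equal $1>0$, so we are in the fully attractive case $k_{ij}\ge0$, for which \eqref{exist} holds over the admissible domains (condition \eqref{condicaoexistencia} is satisfiable, e.g.\ by $(u,u,0)$), and $f$ is continuous and positive somewhere on the compact set $\{|X|=1\}\cap(\real_0^+)^3$, so that $f_{max}>0$ and $\mathcal X\neq\emptyset$. By the last assertion of Theorem~\ref{caract}, to obtain $G^+=\emptyset$ it suffices to show that every $X\in\mathcal X$ has a vanishing component.

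I would then reduce $\mathcal X$ to a finite-dimensional optimization. Since $p=3$ we have $p+1=4$, hence for $X=(x_1,x_2,x_3)$ with $|X|=1$,
\ben
f(X)=\sum_{i\neq j}x_i^4x_j^4=2\bigl(x_1^4x_2^4+x_1^4x_3^4+x_2^4x_3^4\bigr).
\een
The substitution $y_i=x_i^2\ge0$ turns $|X|=1$ into $y_1+y_2+y_3=1$ and gives $f(X)=2g(y)$, where $g(y)=y_1^2y_2^2+y_1^2y_3^2+y_2^2y_3^2$. Thus, via $x_i=\sqrt{y_i}$, the set $\mathcal X$ corresponds bijectively to the set of maximizers of $g$ over the standard simplex $\Sigma=\{y\ge0:\ y_1+y_2+y_3=1\}$, and it remains to show that each such maximizer has a zero coordinate.

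The key computation is $\max_\Sigma g=1/16$, attained exactly at the three permutations of $(\tfrac12,\tfrac12,0)$. I would argue by freezing one variable: setting $y_3=t\in[0,1]$, $s=1-t=y_1+y_2$ and $q=y_1y_2\in[0,s^2/4]$, one gets $g=q^2-2t^2q+t^2s^2$, an upward parabola in $q$, so on $[0,s^2/4]$ its maximum is attained at an endpoint. The endpoint $q=0$ yields $g=t^2(1-t)^2\le1/16$, with equality iff $t=\tfrac12$ (and then one of $y_1,y_2$ vanishes, the other equals $\tfrac12$); the endpoint $q=s^2/4$ yields $g=(1-t)^2\bigl((1-t)^2+8t^2\bigr)/16$, and the inequality $(1-t)^2\bigl((1-t)^2+8t^2\bigr)\le1$ on $[0,1]$ reduces, after expansion, to the cubic sign estimate $t\bigl(9t^3-20t^2+14t-4\bigr)\le0$, which holds on $[0,1]$ with equality only at $t=0$. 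In either case equality forces a zero coordinate, so every maximizer of $g$ on $\Sigma$, hence every $X\in\mathcal X$, has a vanishing component. Theorem~\ref{caract} then gives $G^+=\emptyset$; in fact every ground-state is, up to a permutation, of the semitrivial form $(a_1u_0,a_2u_0,0)$ with $|a_1|=|a_2|$ and $u_0$ a ground-state of \eqref{equ_0}.

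The main obstacle is precisely this last optimization: one must rule out maximizers with all components positive, and this is not automatic, since the symmetric point $(\tfrac13,\tfrac13,\tfrac13)$ is an interior critical point of $g$ on $\Sigma$ — it is simply not the global maximum, its value $1/27$ being strictly below the boundary value $1/16$. Everything else (the applicability of Theorem~\ref{caract}, the passage to $\Sigma$) is routine. An alternative to the freezing argument would be a Lagrange-multiplier analysis on the open faces of $\Sigma$, showing that the only critical point with all coordinates positive is the symmetric one, and then comparing with the faces; the freezing argument seems the most economical.
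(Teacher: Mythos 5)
Your proof is correct, but it follows a genuinely different route from the paper's. Both arguments rest on Theorem~\ref{caract}, but you exploit its final assertion and attack the finite-dimensional problem head-on: via $y_i=x_i^2$ and the freeze-one-variable/convexity-in-$q=y_1y_2$ argument you determine the \emph{entire} set $\mathcal{X}$, namely the normalized permutations of $(1,1,0)$, and conclude at once that $G^+=\emptyset$ (and, as a bonus, that $G$ consists exactly of the states $(1,1,0)u_0$ up to permutation and phases). The paper instead takes a putative nontrivial ground-state $U=(a_iu_0)$, derives the algebraic system $b_i(b_j^2+b_k^2)=1$ for $b_i=a_i^2$, shows by an ordering/symmetry argument that its only positive solution is $b_1=b_2=b_3=2^{-1/3}$, and then rules this single candidate out by comparing its action with that of the semitrivial bound-state $(1,1,0)u_0$. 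The two computations are close cousins: the paper's algebraic system is precisely the interior Lagrange condition for your constrained maximization, and its action comparison ($3\cdot2^{-1/3}$ versus $2$, after the paper's apparent typo $2^{1/6}$ is corrected to $2^{1/3}$) is monotonically equivalent to comparing the values $1/27$ and $1/16$ of your $g$ at the interior critical point and at the boundary maximizer. Your version buys a complete description of $G$ in one pass and avoids having to exhibit and test a competitor bound-state; the paper's version has lighter algebra. The one place where your write-up is thinner than it should be is the assertion that $9t^3-20t^2+14t-4<0$ on $(0,1]$: this is true (the cubic equals $-4$ at $t=0$, $-1$ at $t=1$, and is still negative at its unique interior local maximum near $t\approx0.567$), but since it is the step that actually excludes interior maximizers along the $q=s^2/4$ edge, you should supply the one-line verification rather than leave it as a "sign estimate".
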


We conjecture that the above result applies for more general $M$ and $p$, with $k_{ii}=\mu, \forall i,$ and $k_{ij}=b, \ \forall i\neq j$, $\mu\ll b$. In fact, a necessary and sufficient condition for the existence of nontrivial ground-states should be (see equation \eqref{Itodosiguais})
\ben
\frac{M}{(M-1)^{1/p}}\le \frac{M-1}{(M-2)^{1/p}}.
\een
In fact, \textit{if the only possible nontrivial ground-state is the one with all components equal}, this condition determines wether it truly is a ground-state. Numerical simulations suggest that this uniqueness should hold for any $p,M$. We advise the reader to compare this hypothesis with the condition for existence of nontrivial ground-states that appears in \cite{liuwang}.

\begin{center}
\textbf{Approach 2: Mandel's characteristic function}
\end{center}
Once again, given a nonempty symmetric subset $P$ of $\{1,...,M\}^2$ and $\beta\in\real$, consider the following system:

\ben
\Delta u_i - \omega_i u_i + \sum_{(i,j)\notin P} k_{ij}|u_j|^{p+1}|u_i|^{p-1}u_i + \sum_{(i,j)\in P} \beta k_{ij}|u_j|^{p+1}|u_i|^{p-1}u_i =0,\quad i=1,...,M.
\een

In section 5, we shall build a mapping $\beta\mapsto\hat{\beta}$ such that, if
\bi
\item $\beta<\hat{\beta}$, then $G_\beta^+=\emptyset$;
\item $\beta>\hat{\beta}$, then $G_\beta^+=G_\beta$; 
\item $\hat{\beta}=\beta$, then $G_\beta\setminus G_\beta^+\neq\emptyset$.
\ei

This approach was introduced by R. Mandel (\cite{mandel}) for the system with two equations to study the existence of nontrivial ground-states as a function of the coupling coefficient $k_{12}$. The (very important) feature of the case $M=2$ is that any semitrivial bound-state is never influenced by the coupling coefficient. This implies that $\hat{\beta}$ is constant and therefore it defines in a very precise way when does $G^+\neq\emptyset$. For more equations, $\hat{\beta}$ is not that well-behaved (for more details, see section 5 and the last example in section 6). Using this mapping, we may however prove two results:
\begin{prop}\label{propmandel1}
Let $U$ be a semitrivial bound-state for (M-NLS') and suppose that $P\subset \{1,...,M\}$ is such that
\ben
(i,j)\in P\Rightarrow U_iU_j\equiv 0.
\een
Then, for $\beta$ large, $U\notin G$.
\end{prop}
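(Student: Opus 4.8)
The plan is to show that, when $\beta$ is large, the ground-state action level $\mathcal{I}_\beta$ of the perturbed system drops strictly below the action of any fixed semitrivial bound-state $U$ satisfying the hypothesis, so that $U$ cannot be a ground-state. First I would set up the action functional $\mathcal{A}_\beta$ associated to the perturbed system and recall (as is standard, and as is implicit in the discussion preceding the statement) that $\mathcal{I}_\beta = \inf_{\mathcal{N}_\beta}\mathcal{A}_\beta$ over the Nehari manifold, equivalently a constrained minimization of the quadratic part subject to the sextic (degree $2p+2$) coupling form taking a prescribed positive value. The crucial observation is that $U$ itself, being semitrivial with $U_iU_j\equiv 0$ for all $(i,j)\in P$, has all its $P$-coupling terms vanish: the nonlinear functional evaluated at $U$ is therefore independent of $\beta$, and so the action $\mathcal{A}_\beta(U)$ is a constant $c_0$ not depending on $\beta$.

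Next I would exhibit a competitor whose action tends to $-\infty$ (or at least stays bounded strictly below $c_0$) as $\beta\to+\infty$. Since we may assume, as in the setup, $k_{ij}>0$ for $(i,j)\in P$, pick indices $(i_0,j_0)\in P$ and a test configuration $W$ in which both $w_{i_0}$ and $w_{j_0}$ are nonzero (e.g. both equal to a fixed positive $H_0^1(\Omega)$ function, all other components zero). Then $\sum_{(i,j)\in P}\beta k_{ij}\int |w_i|^{p+1}|w_j|^{p+1}>0$ grows linearly in $\beta$. Rescaling $W\mapsto tW$ and optimizing $t$ along the Nehari manifold, the resulting action behaves like $C\,\beta^{-1/p}$ times a fixed positive quantity — hence $\mathcal{A}_\beta$ evaluated along this projected competitor tends to $0^+$, and in particular is $<c_0$ once $\beta$ is large enough (assuming $c_0>0$, which holds since $U\neq 0$; if one prefers, one can instead keep the quadratic part fixed and let the constraint value blow up, again forcing the infimum down). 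Therefore $\mathcal{I}_\beta<\mathcal{A}_\beta(U)$ for $\beta$ large, so $U\notin G_\beta$.

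The step I expect to be the main obstacle is making precise the relationship between $\mathcal{I}_\beta$ and the constrained infimum, and ensuring the Nehari projection of the fixed competitor $W$ is well-defined and gives the claimed $\beta$-dependence — this is where one must be careful that the coupling form at $W$ is positive and that the scalar maximization over $t$ behaves monotonically. Once that bookkeeping is in place, the conclusion is immediate: the action of the semitrivial $U$ is frozen in $\beta$ while the ground-state level is pushed down, forcing $U$ out of $G$ for $\beta$ beyond an explicit threshold. A secondary, purely technical point is to confirm that all the integrals involved are finite and the functionals are continuous in the relevant topology, which follows from $0<p<2/(N-2)^+$ and the Sobolev embeddings, exactly as in the existence theory recalled in the introduction.
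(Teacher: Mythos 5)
Your argument is correct, but it is not the route the paper takes. The paper disposes of this proposition in three lines using the characteristic function $\hat{\beta}$ built just before it: assuming $U\in G_{\beta_n}$ for some $\beta_n\to\infty$, the hypothesis forces $J_P(U)=0$, hence $J_{\beta}(U)=J(U)$ and the semitrivial level $\mathcal{I}_{\beta_n}^{sem}$ (realized by $U$) is frozen, hence $\hat{\beta}_n$ is constant in $n$; for $n$ large one gets $\beta_{n}>\hat{\beta}_{n}$, which by the dichotomy of Section 5 gives $G_{\beta_n}=G_{\beta_n}^+$ and contradicts $U$ being semitrivial. You instead unpack the same mechanism directly: the pivotal observation --- that $J_\beta(U)=J(U)$, so $I(U)$ and $S(U)$ do not move with $\beta$ --- is identical, but you compare $I(U)$ with the constrained infimum $\mathcal{I}_\beta=\bigl(\inf_{J_\beta=1}I\bigr)^{(p+1)/p}$ and drive the latter to $0$ at the explicit rate $\beta^{-1/p}$ via a fixed competitor charging a pair in $P$. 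Your version buys self-containedness (no appeal to the $\hat{\beta}$ dichotomy) and a quantitative threshold; the paper's buys brevity and the stronger by-product that \emph{all} ground-states are nontrivial for large $\beta$. Two points to pin down in your write-up: (i) the identification of the ground-state with a minimizer of the constrained problem is the Lemma of Section 2 and requires (P1) and (Exist) for the $\beta$-system, both of which are automatic once you assume $U\in G_\beta$ for contradiction; (ii) the normalization of your competitor $W$ needs $J_\beta(W)>0$, which holds for $\beta$ large because $J_P(W)>0$ while $J_{NP}(W)$ is fixed, and the projected action then tends to $0^+$ rather than $-\infty$ --- which suffices, since $S(U)=\bigl(\tfrac12-\tfrac{1}{2p+2}\bigr)I(U)>0$ is a fixed positive constant.
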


\begin{prop}\label{propmandel2}
Consider system (M-NLS') and fix $p\ge 1$. Suppose that $k_{ij}=\beta>0,\ \forall i\neq j$, and $k_{ii}=\mu>0, \forall i$. If $\beta\ll \mu$, any ground-state has exactly one nonzero component.
\end{prop}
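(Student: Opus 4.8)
The plan is to combine the variational characterization of ground-states as constrained minimizers (recalled in the Introduction) with a componentwise comparison with the scalar ground-state problem. First I would normalise away $\mu$: the substitution $u_i=\mu^{-1/(2p)}w_i$ turns the system into the same system with $k_{ii}=1$ and $k_{ij}=\beta/\mu$, and since this is a fixed common rescaling of all components it sends bound-states to bound-states, ground-states to ground-states, and does not change which components vanish. Hence I may assume $\mu=1$, and $\beta\ll\mu$ becomes $\beta<1$ (with room to spare; I expect $\beta<1$ to be the exact threshold here). Since $k_{11}=1>0$, condition \eqref{condicaoexistencia} holds, so ground-states exist and --- up to one further common positive scaling --- are precisely the minimizers of
\be
\mathcal I_\beta=\inf\Big\{T(U):N_\beta(U)=1\Big\},\quad T(U)=\sum_{i=1}^M\int|\nabla u_i|^2+\omega_i|u_i|^2,\quad N_\beta(U)=\sum_i\int|u_i|^{2p+2}+\beta\sum_{i\ne j}\int|u_i|^{p+1}|u_j|^{p+1}.
\ee
Because that scaling is common to all components, it is enough to prove that every minimizer of $\mathcal I_\beta$ has exactly one nonzero component.

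The second step is the comparison. Let $m_\omega=\inf\{\int|\nabla u|^2+\omega|u|^2:\int|u|^{2p+2}=1\}$, positive by the subcritical embedding $H^1_0(\Omega)\hookrightarrow L^{2p+2}(\Omega)$, and $m_*=\min_i m_{\omega_i}$. Testing $\mathcal I_\beta$ with one rescaled function placed in a single slot gives $\mathcal I_\beta\le m_*$. Now let $U$ be a minimizer and write $t_i=\int|u_i|^{2p+2}$, $s_i=t_i^{1/2}$, $q=2/(p+1)$. The scalar inequality $\int|\nabla u_i|^2+\omega_i|u_i|^2\ge m_{\omega_i}t_i^{1/(p+1)}$ applied in each slot yields $m_*\ge\mathcal I_\beta=T(U)\ge m_*\sum_i s_i^{q}$, so $\sum_i s_i^{q}\le1$, while Cauchy--Schwarz on the coupling terms gives
\be
1=N_\beta(U)\le(1-\beta)\sum_i s_i^2+\beta\Big(\sum_i s_i\Big)^2 .
\ee
Here the hypothesis $p\ge1$ enters, through $q\le1$: from $\sum_i s_i^q\le1$ we get $s_i\le1$ for every $i$, whence $\sum_i s_i^2\le\sum_i s_i^q\le1$ and, since $s_i\le s_i^q$ for $q\le1$, also $\big(\sum_i s_i\big)^2\le\big(\sum_i s_i^q\big)^{2/q}\le1$. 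Substituting these into the previous line, and using $\beta<1$, forces the chain
\be
1\le(1-\beta)\sum_i s_i^2+\beta\Big(\sum_i s_i\Big)^2\le(1-\beta)\sum_i s_i^q+\beta\Big(\sum_i s_i^q\Big)^{2/q}\le1
\ee
to be a chain of equalities.

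Finally I would read off the equality cases. Equality in $\sum_i s_i^2\le\sum_i s_i^q$, which holds termwise, forces $s_i\in\{0,1\}$ for every $i$; together with $\sum_i s_i^q\le1$ and $U\not\equiv0$ this means exactly one component is nonzero. Thus every minimizer of $\mathcal I_\beta$, hence every ground-state of the original system, has exactly one nonzero component. I do not expect the displayed inequalities to be the real difficulty; the care is needed in the two reductions --- checking that both the $\mu$-rescaling and the passage ground-state $\leftrightarrow$ constrained minimizer act by one common positive scalar on all $M$ components, so that the number of vanishing components is genuinely preserved --- and in invoking the characterization of the ground-state level quoted in the Introduction. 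It is also worth noting that $p\ge1$ is essential to this argument: for $p<1$ one has $q>1$, the bound $\big(\sum_i s_i\big)^2\le1$ fails, and the conclusion need not hold.
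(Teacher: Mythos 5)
Your proof is correct, but it takes a genuinely different route from the paper. The paper proves this proposition by induction on $M$: the base case $M=2$ is imported from \cite{mandel}, and the inductive step uses the characteristic function $\beta\mapsto\hat{\beta}$ of Section 5, bounding $B_\beta(U)$ from below by an explicit function $g(r_1,\dots,r_{M-1})$ of the ratios $r_i=\|u_i\|_{2p+2}/\|u_M\|_{2p+2}$, which is bounded below by a positive constant on $[0,1]^{M-1}$ precisely when $p\ge 1$. You instead argue directly and non-inductively: after the same normalization $\mu=1$, you compare the coupled level $\mathcal{I}_\beta$ with the decoupled scalar levels $m_{\omega_i}$, use Cauchy--Schwarz on the off-diagonal terms (here $\beta>0$ is what makes the estimate go the right way), and exploit the $\ell^q$-versus-$\ell^2$ comparison with $q=2/(p+1)\le 1$ --- which is exactly where $p\ge1$ enters, playing the same role as the lower bound on $g$ in the paper --- to squeeze the resulting chain of inequalities into equalities and read off $s_i\in\{0,1\}$. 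Your equality-case analysis is sound: $1-\beta>0$ is needed to force $\sum_i s_i^2=\sum_i s_i^q$ termwise, and the two reductions you flag (the $\mu$-rescaling and the passage between ground-states and minimizers on $\{J=1\}$, both acting by a single common positive scalar) are indeed the only bookkeeping required and are covered by the variational lemma the paper recalls. What your approach buys is an explicit, quantitative threshold $\beta<\mu$ in place of the paper's non-effective ``$\beta\ll\mu$'' (whose constant degrades through the induction), and independence from the external $M=2$ result and from the $\hat{\beta}$ machinery; what it gives up is the structural information about $\hat{\beta}$ that the paper's argument produces along the way. One small caution: your side remark that $\beta<1$ should be the exact threshold is sharp only for $p=1$ (for $p>1$ the all-equal competitor already pushes the threshold to $2^p-1$), but this does not affect the proof.
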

\vskip20pt

This work is organized as follows: in section 2, we give a few definitions and fix some notations. In section 3, we focus on the results regarding system (M-NLS). In sections 4 and 5, we study system (M-NLS'), the first using perturbation theory, the second using Mandel's characteristic function. Finally, in section 6, we give three examples: one to see the application of theorem \ref{caract}; the rest to show the complexity of these systems for $M=3$. We recall that we shall always assume \eqref{exist}.

\end{section}
\begin{section}{Preliminaries}

\begin{defi}\textit{(Bound-states and ground-states of (M-NLS))}
\begin{enumerate}

\item We define bound-state of (M-NLS) as any element $(u_1,...,u_M)\in (H^1_0(\Omega))^M\setminus \{0\}$ solution of \eqref{BS}
and define $A_{\footnotesize{\mbox{(M-NLS)}}}$ to be the set of all bound-states of (M-NLS).
\item A nontrivial bound-state is a bound-state such that $u_i\neq0, \ \forall i$. The set of such bound-states is called $A_{\footnotesize{\mbox{(M-NLS)}}}^+$. On the other hand, a bound-state which is not nontrivial is called semitrivial.
\item Given $U=(u_1,...,u_M)\in (H^1_0(\Omega)))^M$, set
\ben
I_M(U)=\sum_{i=1}^M \int |\nabla u_i|^2 + \int \omega_i |u_i|^2, \ J_M(U)=\sum_{i,j=1}^M k_{ij}\int |u_i|^{p+1}|u_j|^{p+1}
\een
and define the action of $U$,
\ben
S_M(U)=\frac{1}{2}I_M(U) - \frac{1}{2p+2}J_M(U).
\een
\item The set of ground-states of (M-NLS) is defined as
\ben
G_{\footnotesize{\mbox{(M-NLS)}}}=\{U\in A_{\footnotesize{\mbox{(M-NLS)}}}: S_M(U)\le S_M(W),\ \forall W\in A_{\footnotesize{\mbox{(M-NLS)}}}\}\subset A_{\footnotesize{\mbox{(M-NLS)}}},
\een
and the set of nontrivial ground-states is
\ben
G_{\footnotesize{\mbox{(M-NLS)}}}^+=G_{\footnotesize{\mbox{(M-NLS)}}}\cap A_{\footnotesize{\mbox{(M-NLS)}}}^+.
\een
\end{enumerate}
\end{defi}

\begin{nota}
If $U\in A_{\footnotesize{\mbox{\textit{(M-NLS)}}}}$, $I_M(U)=J_M(U)$ (one multiplies the $i$-th equation by $u_i$ and integrates over $\real^N$). Therefore
\ben
S_M(U)=\left(\frac{1}{2}-\frac{1}{2p+2}\right)I_M(U)=\left(\frac{1}{2}-\frac{1}{2p+2}\right)J_M(U).
\een
Hence a ground-state is a bound-state with $I_M$ (or $J_M$) minimal.
\end{nota}
\begin{nota}
Throughout this work, we shall assume that $k_{ij}$ are such that
\be\tag{P1}
\{U\in (H^1_0(\Omega))^M: J_M(U)>0\}\neq\emptyset.
\ee
This hypothesis is necessary for the existence of bound-states, since $J_M(U)=I_M(U)>0$, for any $U\in A_{\footnotesize{\mbox{\textit{(M-NLS)}}}}$. Furthermore, we shall assume that $\omega>0$.
\end{nota}
\begin{nota}
Since $M\ge 2$ will always be fixed, to simplify notations, we write
\ben
A:=A_{\footnotesize{\mbox{\textit{(M-NLS)}}}},\ G:=G_{\footnotesize{\textit{\mbox{(M-NLS)}}}},\ G^+:=G^+_{\footnotesize{\mbox{\textit{(M-NLS)}}}}
\een
and
\ben
I:=I_M,\ J:=J_M,\ S:=S_M.
\een
\end{nota}

%

Let
\ben
\lambda_G:=\left(\inf_{J(U)=1} I(U)\right)^{\frac{p+1}{p}}.
\een

The following lemma, which may be found in \cite{simao}, gives a variational characterization of the set of ground-states:

\begin{lema}
Under hypothesis (P1) and (Exist), $G$ is the set of solutions of the minimization problem
\ben\label{minlambdaG}
I(U)=\min_{J(W)=\lambda_G} I(W),\quad J(U)=\lambda_G.
\een
Moreover, if $\Omega=\real^N$, (Exist) holds.
\end{lema}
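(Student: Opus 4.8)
The plan is to reduce the statement to the two homogeneities $I(sU)=s^{2}I(U)$ and $J(sU)=s^{2p+2}J(U)$ ($s>0$) together with a single Lagrange-multiplier computation, isolating the one genuinely analytic input — attainment of the relevant constrained infimum — which is what the hypothesis (Exist) encodes and which the last assertion makes explicit for $\Omega=\real^{N}$. First I would set up the scaling picture: for any $U$ with $J(U)>0$ the half-line $\{sU:s>0\}$ meets the Nehari set $\mathcal N:=\{W\neq 0:I(W)=J(W)\}$ at the single point $s_{U}U$ with $s_{U}=(I(U)/J(U))^{1/2p}$, where $I(s_{U}U)=J(s_{U}U)=I(U)^{(p+1)/p}J(U)^{-1/p}=:\Phi(U)$; since $\Phi$ is $0$-homogeneous, $\inf\{\Phi(U):J(U)>0\}=\inf\{I(W)^{(p+1)/p}:J(W)=1\}=\lambda_{G}$, and likewise $\inf\{I(W):J(W)=\lambda\}=\lambda^{1/(p+1)}\lambda_{G}^{p/(p+1)}$ for each $\lambda>0$, which equals $\lambda_{G}$ at $\lambda=\lambda_{G}$. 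By the Remark recalling $I=J$ on $A$, one has $S|_{A}=(\tfrac12-\tfrac1{2p+2})I$, hence $G=\{U\in A:I(U)=\min_{A}I\}$; and since every $U\in A$ lies in $\mathcal N$, $I(U)=\Phi(U)\ge\lambda_{G}$, so $\min_{A}I\ge\lambda_{G}$.

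For the reverse inequality I would use (Exist) — equivalently the compactness established below for $\real^{N}$ — to obtain a minimizer $W_{0}$ of $\inf\{I(W):J(W)=\lambda_{G}\}=\lambda_{G}$. Since $J'(W_{0})[W_{0}]=(2p+2)\lambda_{G}\neq0$, the constraint $\{J=\lambda_{G}\}$ is a $C^{1}$-manifold near $W_{0}$, and Lagrange multipliers give $-\Delta(w_{0})_{i}+\omega_{i}(w_{0})_{i}=\mu\sum_{j}k_{ij}|(w_{0})_{j}|^{p+1}|(w_{0})_{i}|^{p-1}(w_{0})_{i}$ for a constant $\mu$; testing with $(w_{0})_{i}$ and summing gives $I(W_{0})=\mu J(W_{0})$, i.e. $\mu=1$, so $W_{0}\in A$ with $I(W_{0})=\lambda_{G}$, and therefore $\min_{A}I=\lambda_{G}$. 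The characterization follows: if $U\in G$ then $U\in A$ with $I(U)=\lambda_{G}$, hence $J(U)=I(U)=\lambda_{G}$ and, by the lower bound, $I(U)=\min\{I(W):J(W)=\lambda_{G}\}$, i.e. $U$ solves \eqref{minlambdaG}; conversely a solution of \eqref{minlambdaG} has $J(U)=\lambda_{G}$, $I(U)=\lambda_{G}$, lies in $A$ by the same multiplier argument, and attains $\min_{A}I$, so $U\in G$.

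For the last sentence it suffices to attain $m:=\inf\{I(W):J(W)=1\}$ when $\Omega=\real^{N}$: rescaling a minimizer onto $\mathcal N$ and reading off $\mu=1$ as above yields a bound-state with $I=\lambda_{G}=\min_{A}I$, i.e. a ground-state, hence (Exist). Here $m>0$ since Gagliardo-Nirenberg (legitimate because $2p+2<2^{*}$) gives $|J(W)|\le C\,I(W)^{p+1}$. I would take a minimizing sequence $(W^{n})$, bounded in $(H^{1}(\real^{N}))^{M}$, and run the concentration-compactness trichotomy on $\rho_{n}:=\sum_{i}(|\nabla w^{n}_{i}|^{2}+\omega_{i}|w^{n}_{i}|^{2})$. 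Vanishing is excluded, for Lions' lemma would force $w^{n}_{i}\to0$ in $L^{2p+2}$ and then $J(W^{n})\to0$. Dichotomy is excluded: it would produce, after translations, $W^{n}=W^{n,1}+W^{n,2}+o(1)$ with asymptotically disjoint supports, $I(W^{n,j})\to m_{j}>0$, $m_{1}+m_{2}=m$ and $J(W^{n,1})+J(W^{n,2})\to1$; combining $I(W^{n,j})\ge m\max\{J(W^{n,j}),0\}^{1/(p+1)}$ with the strict superadditivity $\theta^{1/(p+1)}+(1-\theta)^{1/(p+1)}>1$ on $(0,1)$ — and handling separately the case where a limiting $J$-value is $\le0$ — forces $m>m$. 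Hence compactness holds: up to translations $W^{n}$ converges weakly in $(H^{1})^{M}$ and strongly in $(L^{2p+2})^{M}$ to some $W$ with $J(W)=1$ and $I(W)\le\liminf I(W^{n})=m$, so $I(W)=m$.

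The hard part is exactly this attainment: both $\min_{A}I=\lambda_{G}$ in the characterization and the final claim reduce to strong $L^{2p+2}$-compactness of minimizing sequences, carried out on $\real^{N}$ by the concentration-compactness step, whose delicate point is the exclusion of dichotomy via the strict concavity of $\lambda\mapsto\lambda^{1/(p+1)}$, further complicated by the fact that $J$ need not be sign-definite on the two split pieces; everything else is bookkeeping with the homogeneities and one Lagrange multiplier.
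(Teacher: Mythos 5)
The paper does not actually prove this lemma; it is quoted from \cite{simao}. Measured against the standard variational argument (which is evidently what is intended), your reconstruction is essentially right: the homogeneity bookkeeping, the fact that every bound-state lies on the Nehari set so that $\min_A I\ge\lambda_G$, the Lagrange-multiplier computation showing that a constrained minimizer is a bound-state with multiplier $\mu=1$ (hence a ground-state, giving the inclusion of constrained minimizers into $G$ unconditionally), and the concentration-compactness step on $\real^N$, including the correct exclusion of vanishing via $J=1$ and of dichotomy via strict superadditivity of $\lambda\mapsto\lambda^{1/(p+1)}$ with due care for the possible non-positivity of $J$ on the split pieces.

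The one genuine gap is the sentence where you ``use (Exist) --- equivalently the compactness established below for $\real^N$ --- to obtain a minimizer $W_0$'' of $\inf\{I(W):J(W)=\lambda_G\}$. As defined in the paper, (Exist) says only that $G\neq\emptyset$, i.e.\ that $S$ (equivalently $I$) attains its minimum over the set $A$ of bound-states; it does not say that the constrained infimum is attained, and the two statements are not obviously equivalent. Your multiplier argument yields only one inclusion; the forward inclusion $G\subset\{\text{solutions of }\eqref{minlambdaG}\}$ is precisely the identity $\min_A I=\lambda_G$, and a ground-state gives you a priori only $\min_A I\ge\lambda_G$. If the constrained infimum were not attained, one could in principle have $A\neq\emptyset$ with $\min_A I>\lambda_G$ (a minimizing sequence on $\{J=\lambda_G\}$ does not produce a bound-state below the ground-state level without some compactness), in which case the lemma's conclusion would fail even though (Exist) holds. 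On $\real^N$ your concentration-compactness step closes this, and on bounded $\Omega$ Rellich does; but for a general $\Omega$ satisfying only (Exist) you must either read (Exist) as ``problem \eqref{minlambdaG} has a solution'' (arguably what the paper intends, given the discussion preceding its statement of (Exist)) or supply an argument that a minimal-action bound-state necessarily sits at level $\lambda_G$. As written you assert this equivalence rather than prove it, and it is the only step of the lemma that is not bookkeeping.
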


\end{section}
\begin{section}{Proof of theorem 1}

Take $U\in G$. Define $\hat{U}(x)=(|u_1(x)|,..., |u_M(x)|)$ and $u(x)=|\hat{U}(x)|$. Since $J(U)=J(\hat{U})$ and $I(U)\ge I(\hat{U})$, $\hat{U}$ is a minimizer. Fix $X_0\in\mathcal{X}$. Now notice that
\begin{align*}
J(\hat{U})&=\int f(\hat{U}(x)) dx = \int f\left(\frac{\hat{U}(x)}{u(x)}\right)u(x)^{2p+2} dx \le \int f(X_0)u(x)^{2p+2}dx \\&= \int f\left(X_0u(x)\right) dx = J(X_0u)
\end{align*}
and that, from Cauchy-Schwarz inequality,
\begin{align*}
I(X_0u)&=\int \omega u(x)^2|X_0|^2 + \left|\nabla (u(x))\right|^2|X_0|^2 = \int \sum_{i=1}^M\omega|u_i|^2 + \left|\frac{\sum_{i=1}^M |u_i|\nabla |u_i|}{\left(\sum_{i=1}^M |u_i|^2\right)^{\frac{1}{2}}}\right|^2 \\&\le \int \sum_{i=1}^M\omega|u_i|^2 +  |\nabla |u_i||^2=I(\hat{U}).
\end{align*}
Let $a\le 1$ be such that $J(aX_0u)=J(\hat{U})$. Then
\ben
I(aX_0u)\le I(X_0u) \le I(\hat{U})
\een By the minimality of $U$, the above inequalities must be equalities: 
\ben\label{U=|U|}
a=1,\quad I(X_0u)=I(U).
\een
Therefore $X_0u$ is also a ground-state. Note that $J(U)=J(X_0u)$ implies that $\hat{U}(x)=u(x)X(x)$ a.e. $x\in\real^N$, where $X(x)\in\mathcal{X}$.

\noindent Since $X_0u$ is a bound-state for (M-NLS), one easily checks that
\ben
-\Delta u + \omega u = f_{max}|u|^{2p}u
\een
and so, setting $c=(f_{max})^{1/2p}$, $cu$ is a bound-state for \eqref{equ_0}. The fact that $X_0u$ is a ground-state clearly implies that $cu$ is a ground-state for \eqref{equ_0}. Hence $u=c^{-1}u_0$, with $u_0$ ground-state of \eqref{equ_0}. From the maximum principle, $u>0$ in $\real^N$.

\noindent Since $\hat{U}(x)=u(x)X(x)$ is a bound-state, inserting this expression into system (M-NLS), one obtains
\ben
2\nabla u\cdot \nabla X_i + u\Delta X_i = 0, \quad i=1,...,M
\een
By integration by parts,
\begin{align*}
\int uX_i\nabla u\cdot \nabla X_i &= - \int uX_i\nabla u\cdot \nabla X_i - \int |u|^2|\nabla X_i|^2 -\int |u|^2X_i\Delta X_i \\&= - \int uX_i\nabla u\cdot \nabla X_i - \int |u|^2|\nabla X_i|^2 + 2\int  uX_i\nabla u\cdot \nabla X_i \\&= \int  uX_i\nabla u\cdot \nabla X_i - \int |u|^2|\nabla X_i|^2
\end{align*}
Hence
\ben
\int |u|^2|\nabla X_i|^2=0,\quad i=1,...,M.
\een
which implies that $X_i$ is constant. Therefore
\ben
\hat{U}=uX, X\in\mathcal{X}. 
\een
\noindent Finally, since $u>0$, one may write $u_i(x)=|u_i(x)|e^{i\theta(x)}=u(x)Xe^{i\theta(x)}$. Then, since $I(U)=I(\hat{U})$,
\begin{align*}
\int \sum_{i=1}^M \omega |u_i|^2+|\nabla |u_i||^2&= \int \sum_{i=1}^M \omega|\hat{U}_i|^2+|\nabla \hat{U}_i|^2 =I(\hat{U}) = I(U) \\&= \int \sum_{i=1}^M \omega|u_i|^2 + |\nabla u_i|^2 = \int \sum_{i=1}^M \omega|u_i|^2 + |\nabla |u_i||^2 + |u_i|^2|\nabla \theta_i(x)|^2.
\end{align*}
One then concludes that $\theta_i$ is constant, which ends the proof.$\ \qedsymbol$

\begin{nota}
As expected, this approach is only possible since the norms inside functional $I$ are the same. This is not the case for the general system \eqref{BSomega}.
\end{nota}

\end{section}

\begin{section}{System (M-NLS'): Perturbation theory}

\begin{lema}[Monotonicity of the action with respect to $\omega$]\label{mon1}
Let $\omega=(\omega_1,...,\omega_M)$ and $\omega'=(\omega_1',...,\omega_M')$ be such that $\omega\ge\omega'$. Fix a matrix $K=(k_{ij})_{1\le i,j\le M}\in\real^{M\times M}$. Let $U^\omega$ be a ground-state of
\begin{equation}
 \Delta u_i - \omega_i u_i + \sum_{j=1}^M k_{ij}|u_j|^{p+1}|u_i|^{p-1}u_i=0 \quad i=1,...,M
\end{equation}
and $U^{\omega'}$ be a ground-state of
\begin{equation}
 \Delta u_i -  \omega_i' u_i + \sum_{j=1}^M k_{ij}|u_j|^{p+1}|u_i|^{p-1}u_i=0 \quad i=1,...,M.
\end{equation}
Then $J(U^\omega)\ge J(U^{\omega'})$.
\end{lema}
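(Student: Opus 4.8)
The plan is to exploit the variational characterization of ground-states provided by the preceding lemma. Recall that a ground-state $U^{\omega'}$ is a minimizer of the problem
\ben
I^{\omega'}(U)=\min_{J(W)=\lambda_G^{\omega'}}I^{\omega'}(W),\quad J(U)=\lambda_G^{\omega'},
\een
where $I^{\omega'}(U)=\sum_i\int|\nabla u_i|^2+\omega_i'|u_i|^2$ and $\lambda_G^{\omega'}=\bigl(\inf_{J(W)=1}I^{\omega'}(W)\bigr)^{(p+1)/p}$. The key quantity to track is $\lambda_G$: since $J$ is homogeneous of degree $2p+2$ and $I$ of degree $2$, one has $J(U^{\omega})=\lambda_G^{\omega}$ exactly (this is how $\lambda_G$ is defined), so the statement $J(U^\omega)\ge J(U^{\omega'})$ is \emph{equivalent} to $\lambda_G^\omega\ge\lambda_G^{\omega'}$, which in turn is equivalent to $\inf_{J(W)=1}I^\omega(W)\ge\inf_{J(W)=1}I^{\omega'}(W)$.

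Once the problem is reduced to comparing these infima, the conclusion is immediate by monotonicity of the functional. First I would fix any $W$ with $J(W)=1$. Since $\omega\ge\omega'$ componentwise and $|w_i|^2\ge 0$, we get
\ben
I^\omega(W)=\sum_{i=1}^M\int|\nabla w_i|^2+\omega_i|w_i|^2\ \ge\ \sum_{i=1}^M\int|\nabla w_i|^2+\omega_i'|w_i|^2=I^{\omega'}(W).
\een
Taking the infimum over the common constraint set $\{J(W)=1\}$ preserves the inequality, giving $\inf_{J(W)=1}I^\omega\ge\inf_{J(W)=1}I^{\omega'}$, hence $\lambda_G^\omega\ge\lambda_G^{\omega'}$, and therefore $J(U^\omega)=\lambda_G^\omega\ge\lambda_G^{\omega'}=J(U^{\omega'})$.

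The only genuine point requiring care — and the step I would expect to be the main obstacle if the bookkeeping is sloppy — is the precise relationship between $J(U)$ at a ground-state and $\lambda_G$. One should verify from the definition of $\lambda_G$ and the scaling argument (if $W$ achieves $\inf_{J=1}I$ with value $m$, then $tW$ with $t^{2p+2}=\lambda_G=m^{(p+1)/p}$ satisfies $J(tW)=\lambda_G$ and, by the Euler-Lagrange/Nehari identity $I=J$ valid on $A$, is the ground-state with $I(tW)=J(tW)=\lambda_G$). Granting that identity, which is recorded in the remarks above, the argument is purely a matter of noting that the constraint set $\{J(W)=1\}$ does not depend on $\omega$, so only the objective functional moves, and it moves monotonically. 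No compactness or existence issue arises here since \eqref{exist} is assumed for both frequency vectors.
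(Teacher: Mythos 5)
Your proof is correct and follows essentially the same route as the paper: both reduce the claim to the identity $J(U^\omega)=\bigl(\inf_{J(W)=1}I^\omega(W)\bigr)^{(p+1)/p}$ from the variational characterization and then observe that the infimum is monotone in $\omega$ because the constraint set $\{J(W)=1\}$ is independent of $\omega$ while $I^\omega(W)\ge I^{\omega'}(W)$ pointwise. Your extra verification of the scaling bookkeeping linking $J$ at a ground-state to $\lambda_G$ is a welcome elaboration of a step the paper leaves implicit.
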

\begin{proof}
Simply recall that
\ben
J(U^\omega)=\left(\inf_{J(U)=1} \sum_{i=1}^M \omega_i\|u_i\|_2^2 + \|\nabla u_i\|_2^2\right)^{\frac{p+1}{p}} \ge \left(\inf_{J(U)=1} \sum_{i=1}^M \omega_i'\|u_i\|_2^2 + \|\nabla u_i\|_2^2\right)^{\frac{p+1}{p}}=J(U^{\omega'}).
\een
\end{proof}

Analogously, we may obtain the following:

\begin{lema}[Monotonicity of the action with respect to $K$]\label{mon2}
Fix $\omega\in(\real^+)^M$. Consider matrices $K=(k_{ij})_{1\le i,j\le M}\in\real^{M\times M}$ and $K'=(k'_{ij})_{1\le i,j\le M}\in\real^{M\times M}$ such that $K\ge K'$. Let $U^K$ be a ground-state of
\begin{equation}
 \Delta u_i - \omega_i u_i + \sum_{j=1}^M k_{ij}|u_j|^{p+1}|u_i|^{p-1}u_i=0 \quad i=1,...,M
\end{equation}
and $U^{K'}$ be a ground-state of
\begin{equation}
 \Delta u_i -  \omega_i u_i + \sum_{j=1}^M k'_{ij}|u_j|^{p+1}|u_i|^{p-1}u_i=0 \quad i=1,...,M.
\end{equation}
Then $I(U^{K'})\ge I(U^K)$.
\end{lema}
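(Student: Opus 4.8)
The plan is to mimic, in dual form, the proof of Lemma \ref{mon1}, using the variational characterization of ground-states recalled in Section 2 together with the Pohozaev-type identity $I(U)=J(U)$ on bound-states. Write $J_K(U)=\sum_{i,j}k_{ij}\int|u_i|^{p+1}|u_j|^{p+1}$ and $J_{K'}(U)$ for the same quantity built from $K'$, and observe that $I$ does not depend on the coupling matrix. The first step is to record that for the ground-state $U^K$ one has $I(U^K)=J_K(U^K)=\lambda_G^K$, where $\lambda_G^K=\big(\inf_{J_K(W)=1}I(W)\big)^{(p+1)/p}$, and likewise $I(U^{K'})=\lambda_G^{K'}$. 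Thus the claim reduces to $\lambda_G^K\le\lambda_G^{K'}$, i.e.\ to $\inf_{J_K(W)=1}I(W)\le\inf_{J_{K'}(W)=1}I(W)$.

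Second, since $K\ge K'$ entrywise and each integral $\int|u_i|^{p+1}|u_j|^{p+1}$ is nonnegative, we obtain the pointwise domination $J_K(W)\ge J_{K'}(W)$ for every $W\in(H^1_0(\Omega))^M$; in particular $\{J_{K'}>0\}\subseteq\{J_K>0\}$, so hypothesis (P1) for $K'$ (granted by the existence of $U^{K'}$) carries over to $K$, and both minimization problems are meaningful.

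Third, the inequality between the two constrained infima follows by scaling/homogeneity. Given any $W$ with $J_{K'}(W)=1$, we have $J_K(W)\ge 1$, so $t:=J_K(W)^{-1/(2p+2)}\in(0,1]$ and the rescaled $tW$ satisfies $J_K(tW)=t^{2p+2}J_K(W)=1$ while $I(tW)=t^2I(W)\le I(W)$ because $I\ge 0$. Hence $\inf_{J_K(V)=1}I(V)\le I(W)$; taking the infimum over all such $W$ gives $\inf_{J_K(V)=1}I(V)\le\inf_{J_{K'}(W)=1}I(W)$, and applying the increasing map $x\mapsto x^{(p+1)/p}$ on $\real^+$ yields $\lambda_G^K\le\lambda_G^{K'}$, that is, $I(U^K)\le I(U^{K'})$.

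There is no real obstacle here: the proof is the exact dual of Lemma \ref{mon1} (there $J$ was fixed and $I$ increased monotonically in $\omega$; here $I$ is fixed and the constraint functional $J_K$ increases monotonically in $K$, which forces the constrained minimum of $I$ to decrease). The only points requiring a moment of care are to scale \emph{down} (with $t\le 1$, so that $I$ cannot increase) and to note the consistency between the constrained formulation of $\lambda_G^K$ and the homogeneous "quotient'' form $\big(\inf_{J_K(W)>0}I(W)J_K(W)^{-1/(p+1)}\big)^{(p+1)/p}$, both of which follow from the scaling invariances $I(tW)=t^2I(W)$, $J_K(tW)=t^{2p+2}J_K(W)$.
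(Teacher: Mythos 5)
Your proof is correct and is essentially the argument the paper intends: the paper gives no explicit proof of Lemma \ref{mon2}, stating only that it follows ``analogously'' to Lemma \ref{mon1}, and your scaling argument (using $J_K\ge J_{K'}$ pointwise, rescaling by $t=J_K(W)^{-1/(2p+2)}\le 1$ to compare the constrained infima, and the identity $I(U^K)=\lambda_G^K$) is exactly the natural way to carry out that analogy when the constraint set, rather than the functional, is what varies.
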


Suppose that one wishes to study $G^+$ in function of a given set of couplings. Let $P$ a nonempty symmetric subset of $\{1,...,M\}^2$ and fix a matrix $K\in \real^{M^2}$. 
Given $\beta\in\real$, consider the system
\ben\label{MNLSbeta}
\Delta u_i - \omega_i u_i + \sum_{(i,j)\notin P} k_{ij}|u_j|^{p+1}|u_i|^{p-1}u_i + \sum_{(i,j)\in P} \beta k_{ij}|u_j|^{p+1}|u_i|^{p-1}u_i =0,\quad i=1,...,M
\een
Suppose, for the sake of simplicity, that $k_{ij}> 0, (i,j)\in P$. Everytime a functional, a set or a solution depends on $\beta$, we shall place a subscript $\beta$.

Set
\ben
\mathcal{I}_\beta=\left(\inf_{J_\beta(U)=1} I(U)\right)^{\frac{p+1}{p}}.
\een

For any $X\subset \{1,...,M\}$, define
\ben
\mathcal{I}_\beta^X:=\left(\inf_{J_\beta(U)=1, U_i=0, i\notin X} I(U)\right)^{\frac{p+1}{p}}, \quad \mathcal{I}_\beta^{sem}:=\min_{X\subsetneq \{1,...,M\}} \mathcal{I}_\beta^X.
\een
Notice that $\mathcal{I}_\beta=\mathcal{I}_\beta^{\{1,...,M\}}$. Then $G^+=G$ iff $\mathcal{I}_\beta< \mathcal{I}_\beta^{sem}$.

From the results regarding existence of ground-states, we know that, for each $X\subset \{1,...,M\}$, there exists $\ubar{\beta}_X$ such that $\beta\le \ubar{\beta}_X$ iff $\mathcal{I}_\beta^X=+\infty$. Define 
\ben
\ubar{\beta}^{sem}:=\min_{X\subsetneq \{1,...,M\}} \ubar{\beta}_X,\quad \ubar{\beta}:=\ubar{\beta}_{\{1,...,M\}}.
\een
Then
\begin{enumerate}
\item If $\beta\le \ubar{\beta}$, there are no ground-states;
\item If $\ubar{\beta}< \beta \le \ubar{\beta}^{sem}$, all ground-states are nontrivial;
\item If $\ubar{\beta}^{sem}<\beta$, both $\mathcal{I}_\beta$ and $\mathcal{I}_\beta^{sem}$ are finite.
\end{enumerate}

\begin{prop}
For any $X\subset \{1,...,M\}$, the mapping $\beta\mapsto \mathcal{I}_{\beta}^X, \beta\in\real$, is continuous (in $\overline{\real}$). In particular, $\mathcal{I}_\beta$ and $\mathcal{I}_\beta^{sem}$ are continuous with respect to $\beta$.
\end{prop}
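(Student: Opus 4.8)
The plan is to prove continuity of $\beta \mapsto \mathcal{I}_\beta^X$ on all of $\overline{\real}$ by combining two one-sided estimates built from the monotonicity lemma (Lemma \ref{mon2}) and a direct comparison of minimizing sequences. Fix $X \subset \{1,\dots,M\}$; throughout we restrict the functionals $I$ and $J_\beta$ to the subspace $\{U : U_i = 0, i \notin X\}$, so that without loss of generality we may assume $X = \{1,\dots,M\}$ and write $\mathcal{I}_\beta = (\inf\{I(U) : J_\beta(U) = 1\})^{(p+1)/p}$. The first observation is that $\beta \mapsto \mathcal{I}_\beta$ is monotone: since $k_{ij} > 0$ for $(i,j) \in P$, increasing $\beta$ only increases the matrix $K_\beta$ entrywise (in the ordering used in Lemma \ref{mon2}), so $\beta \le \beta'$ implies $\mathcal{I}_\beta \le \mathcal{I}_{\beta'}$ whenever both are the action levels of genuine ground-states; more robustly, $J_\beta(U) \le J_{\beta'}(U)$ pointwise on $\{U : I(U) \text{ bounded}\}$ when all relevant $J$-values are nonnegative, hence $\inf\{I : J_{\beta'}(U) = 1\} \le \inf\{I : J_\beta(U) = 1\}$ by rescaling, giving monotonicity of $\mathcal{I}_\beta$ in $\beta$ directly. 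A monotone function into $\overline{\real}$ is automatically continuous except possibly at jump discontinuities, so it remains to rule those out.

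The core of the argument is therefore an upper semicontinuity and a lower semicontinuity estimate at an arbitrary $\beta_0 \in \real$. For upper semicontinuity, fix $\varepsilon > 0$ and pick $U$ with $J_{\beta_0}(U) = 1$ and $I(U) \le (\mathcal{I}_{\beta_0})^{p/(p+1)} + \varepsilon$ (if $\mathcal{I}_{\beta_0} < +\infty$; the case $\mathcal{I}_{\beta_0} = +\infty$ is handled separately below). Then $J_\beta(U) = 1 + (\beta - \beta_0)\sum_{(i,j) \in P} k_{ij} \int |u_i|^{p+1}|u_j|^{p+1} \to 1$ as $\beta \to \beta_0$, and for $\beta$ close enough to $\beta_0$ this quantity is positive; rescaling $U$ by $t_\beta = J_\beta(U)^{-1/(2p+2)} \to 1$ produces a competitor $t_\beta U$ with $J_\beta(t_\beta U) = 1$ and $I(t_\beta U) = t_\beta^2 I(U)$, whence $\limsup_{\beta \to \beta_0} (\mathcal{I}_\beta)^{p/(p+1)} \le I(U) \le (\mathcal{I}_{\beta_0})^{p/(p+1)} + \varepsilon$. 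For lower semicontinuity, one argues symmetrically: take a sequence $\beta_n \to \beta_0$ realizing $\liminf_n \mathcal{I}_{\beta_n}$ and, for each $n$, a near-minimizer $U_n$ with $J_{\beta_n}(U_n) = 1$ and $I(U_n) \le (\mathcal{I}_{\beta_n})^{p/(p+1)} + 1/n$. If $\liminf_n \mathcal{I}_{\beta_n} < +\infty$, pass to a subsequence along which $I(U_n)$ is bounded; then $\sum_{(i,j)\in P} k_{ij}\int |u_{n,i}|^{p+1}|u_{n,j}|^{p+1}$ is bounded (by Gagliardo--Nirenberg, since $I(U_n)$ controls the $H^1$-norm of each component and $0 < p < 2/(N-2)^+$), so $J_{\beta_0}(U_n) = 1 + (\beta_0 - \beta_n)\sum_{(i,j)\in P} k_{ij}\int|u_{n,i}|^{p+1}|u_{n,j}|^{p+1} \to 1$; rescaling $U_n$ to satisfy $J_{\beta_0} = 1$ and using that the rescaling factors tend to $1$ yields $\mathcal{I}_{\beta_0} \le \liminf_n \mathcal{I}_{\beta_n}$.

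The remaining point is continuity at the endpoints of the finiteness interval, i.e.\ at $\beta_0 = \ubar{\beta}_X$ and at $\beta_0 = \pm\infty$. At $\beta_0 = \ubar{\beta}_X$ we must show $\mathcal{I}_\beta \to +\infty = \mathcal{I}_{\ubar{\beta}_X}$ as $\beta \downarrow \ubar{\beta}_X$; this follows because for $\beta \le \ubar{\beta}_X$ the constraint set $\{J_\beta(U) = 1\}$ with $I$ bounded is empty — precisely the characterization of $\ubar{\beta}_X$ from the existence theory quoted above — so monotonicity forces the left limit to be $+\infty$, and the right limit is $+\infty$ too if $\mathcal{I}$ is genuinely $+\infty$ in a right-neighborhood, while if $\mathcal{I}_\beta < \infty$ for all $\beta > \ubar{\beta}_X$ one shows $\mathcal{I}_\beta \to +\infty$ by noting that a competitor with $J_\beta = 1$ and bounded $I$ would, by the $J_\beta(U) \ge J_{\ubar{\beta}_X}(U) + (\text{error})$ type estimate, contradict emptiness at $\ubar{\beta}_X$ once $\beta$ is close enough; for $\beta \to +\infty$ one has $\mathcal{I}_\beta$ eventually finite and the same rescaling argument with the roles of $\beta$ and $\infty$ handled via monotonicity. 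Finally $\mathcal{I}_\beta = \mathcal{I}_\beta^{\{1,\dots,M\}}$ is continuous and $\mathcal{I}_\beta^{sem} = \min_{X \subsetneq \{1,\dots,M\}} \mathcal{I}_\beta^X$ is a finite minimum of continuous functions, hence continuous.

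The main obstacle I anticipate is the behavior at the threshold $\beta_0 = \ubar{\beta}_X$, where $\mathcal{I}$ transitions from $+\infty$ to a finite value: one must be careful that $\mathcal{I}_\beta$ does not remain infinite for $\beta$ slightly above $\ubar{\beta}_X$ (which would still be continuous) versus jumping to a finite value discontinuously (which the monotone-function argument already excludes, since a monotone function cannot jump \emph{down} as the argument increases — so the only issue is confirming the value at the threshold itself equals $+\infty$, which is exactly the definition of $\ubar{\beta}_X$). In practice the whole proof reduces to: monotonicity (immediate from Lemma \ref{mon2}) plus the elementary rescaling comparison showing no jumps, plus invoking the definition of $\ubar{\beta}_X$ at the endpoint.
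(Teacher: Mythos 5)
Your proposal is correct and follows essentially the same route as the paper's proof: rescale competitors between the constraints $J_{\beta_0}(U)=1$ and $J_{\beta_n}(U)=1$, control the rescaling factors through the bound on $J_P(U)$ in terms of $I(U)$, and treat the threshold $\ubar{\beta}_X$ separately via $1=J_{\beta}(U)\le C(\beta-\ubar{\beta}_X)\,J_P(U)$. The only slip is the direction of monotonicity in your opening paragraph --- Lemma \ref{mon2} makes $\mathcal{I}_\beta$ non-increasing in $\beta$, not non-decreasing --- but your own rescaling derivation immediately afterwards yields the correct (non-increasing) direction, which is the one your endpoint argument actually uses, so nothing breaks.
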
 
\begin{proof}
Notice that we only need to prove the proposition for $X=\{1,...,M\}$, since any other case may be reduced to this one.

\noindent Fix $\beta_0\in\real$. If $\beta_0<\ubar{\beta}$, then $\mathcal{I}_{\beta}\equiv +\infty$ in a neighbourhood of $\beta_0$ and so it is continuous.

\noindent If $\beta_0>\ubar{\beta}$, let $\beta_n\to\beta_0$. By definition, there exists $\{U_n\}\subset (H^1(\real^N))^M$ such that
\ben
I(U_n)=\mathcal{I}_{\beta_n},\quad J_{\beta_n}(U_n)=1.
\een
Let $\lambda_n=J_{\beta_0}(U_n)^{-1/2p}$. Then $J_{\beta_0}(\lambda_nU_n)=1$. Moreover,
\ben
|\lambda_n^{-1/2p} - 1|=|J_{\beta_0}(U_n) - J_{\beta_n}(U_n)|=|\beta_n-\beta_0||J_P(U_n)|
\een
Since 
\ben
|J_P(U_n)|\le C\|U_n\|_{H^1}^{2p+2}\le CI(U_n)^{2p+2} \le C(\mathcal{I}_{\beta_n})^{2p+2}<C,
\een
we obtain $\lambda_n\to 1$. Therefore,
\ben
\liminf \mathcal{I}_{\beta_n} = \liminf I(U_{\beta_n}) = \liminf I(\lambda_nU_{\beta_n}) \ge \mathcal{I}_{\beta_0}.
\een
On the other hand, for $n>0$, let $U$ be such that
\ben
\mathcal{I}_{\beta_0} = I(U),\quad J_{\beta_0}(U)=1.
\een
Define $\lambda^n=J_{\beta_n}(U)^{-1/2p}$. As before, $J_{\beta_n}(\lambda^nU)=1$ and $\lambda^n\to 1$. Hence
\ben
\mathcal{I}_{\beta_0}=  I(U) = \lim I(\lambda^nU) \ge \limsup \mathcal{I}_{\beta_n}.
\een
Therefore $\mathcal{I}_\beta$ is continuous for $\beta>\ubar{\beta}$.

\noindent If $\beta_0=\ubar{\beta}$ and $\beta_n\to \beta_0^+$, consider $U_n$ as above. Then
\ben
1=J_{\beta_n}(U) = J_{\beta_0}(U_n) + (\beta_n - \beta_0)J_P(U_n) \le C(\beta_n - \beta_0)I(U_n)^{2p+2}.
\een
and so $\mathcal{I}_{\beta_n}=I(U_n)\to\infty=\mathcal{I}_{\beta_0}$. 

\end{proof}

\begin{lema}
Suppose that $\ubar{\beta}<\ubar{\beta}^{sem}$. For $\beta$ sufficiently close to $\ubar{\beta}^{sem}$, $G=G^+$.
\end{lema}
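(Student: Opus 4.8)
The plan is to reduce the statement to the criterion recalled above, namely that $G=G^+$ if and only if $\mathcal{I}_\beta<\mathcal{I}_\beta^{sem}$, and then to examine the two regimes $\beta\le\ubar{\beta}^{sem}$ and $\beta>\ubar{\beta}^{sem}$ separately, using continuity of the action levels to pass across $\ubar{\beta}^{sem}$.

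In the first regime, for $\ubar{\beta}<\beta\le\ubar{\beta}^{sem}$, the conclusion is immediate from the trichotomy established before the continuity proposition: since $\beta\le\ubar{\beta}^{sem}\le\ubar{\beta}_X$ for every proper subset $X\subsetneq\{1,\dots,M\}$, the defining property of $\ubar{\beta}_X$ gives $\mathcal{I}_\beta^X=+\infty$, hence $\mathcal{I}_\beta^{sem}=+\infty$; on the other hand $\beta>\ubar{\beta}$ forces $\mathcal{I}_\beta<+\infty$. Therefore $\mathcal{I}_\beta<\mathcal{I}_\beta^{sem}$ and $G=G^+$ on this half-open interval, which by the hypothesis $\ubar{\beta}<\ubar{\beta}^{sem}$ contains a left neighbourhood of $\ubar{\beta}^{sem}$.

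In the second regime, for $\beta>\ubar{\beta}^{sem}$, I would invoke the continuity of $\beta\mapsto\mathcal{I}_\beta$ and $\beta\mapsto\mathcal{I}_\beta^{sem}$ in $\overline{\real}$ from the preceding proposition. Evaluating at $\beta=\ubar{\beta}^{sem}$ exactly as in the previous paragraph gives $\mathcal{I}_{\ubar{\beta}^{sem}}^{sem}=+\infty$, while $\mathcal{I}_{\ubar{\beta}^{sem}}<+\infty$ because $\ubar{\beta}<\ubar{\beta}^{sem}$. Continuity then yields $\mathcal{I}_\beta^{sem}\to+\infty$ and $\mathcal{I}_\beta\to\mathcal{I}_{\ubar{\beta}^{sem}}<+\infty$ as $\beta\to(\ubar{\beta}^{sem})^+$, so there is $\epsilon>0$ with $\mathcal{I}_\beta^{sem}>\mathcal{I}_\beta$ for $\ubar{\beta}^{sem}<\beta<\ubar{\beta}^{sem}+\epsilon$, again giving $G=G^+$. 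Combining the two regimes, $G=G^+$ for all $\beta$ in a neighbourhood of $\ubar{\beta}^{sem}$.

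The only step needing genuine care — and the crux of the argument, though it is short — is the identification $\mathcal{I}_{\ubar{\beta}^{sem}}^{sem}=+\infty$, which is precisely where the hypothesis $\ubar{\beta}<\ubar{\beta}^{sem}$ enters (the full problem is already non-degenerate at $\beta=\ubar{\beta}^{sem}$ while every semitrivial subproblem is still at its degeneracy threshold), together with the defining property of each $\ubar{\beta}_X$. Everything else is a routine application of the continuity proposition read in the extended reals, and no new variational estimates are required.
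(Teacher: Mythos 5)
Your proof is correct and follows essentially the same route as the paper: both arguments hinge on the observation that $\mathcal{I}_{\ubar{\beta}^{sem}}^{sem}=+\infty$ while $\mathcal{I}_\beta$ remains bounded near $\ubar{\beta}^{sem}$ (because $\ubar{\beta}<\ubar{\beta}^{sem}$), combined with the continuity of the semitrivial action level. The only cosmetic difference is that the paper obtains the uniform bound on $\mathcal{I}_\beta$ from a fixed nontrivial test function $U$ with $J_\beta(U)>m$ near $\ubar{\beta}^{sem}$, whereas you invoke the continuity of $\mathcal{I}_\beta$ itself; both are legitimate consequences of the preceding proposition.
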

\begin{proof}
Since $\ubar{\beta}<\ubar{\beta}^{sem}$, there exists $U$ nontrivial such that, for some $m>0$ and for $\beta$ close to $\ubar{\beta}^{sem}$,
\ben
J_{\beta}(U)>m.
\een
This implies that
\ben
\mathcal{I}_\beta\le \left(\frac{1}{m^{\frac{1}{p+1}}}I(U)\right)^{\frac{p+1}{p}}.
\een
On the other hand, since $\mathcal{I}_\beta^{sem}$ is continuous, for $\beta$ sufficiently close to $\ubar{\beta}^{sem}$,
\ben
\mathcal{I}_\beta^{sem}> \left(\frac{1}{m^{\frac{1}{p+1}}}I(U)\right)^{\frac{p+1}{p}}\ge \mathcal{I}_\beta.
\een
Therefore $G=G^+$.
\end{proof}
\noindent\textit{Proof of corollary \ref{semitrivialinfinito}}. 
\begin{enumerate}
\item First part: take $P=\{(1,1),(2,2)\}$. One easily observes that $\ubar{\beta}^{sem}=0$ and that $\ubar{\beta}<0$. Therefore, using the previous lemma, for $\beta>0$ small enough, $G=G^+$.

\item Second part: take $P=\{(i,j), \ 1\le i,j\le M,\ i\neq j\}$. A simple calculation shows that $\ubar{\beta}(M)=2/(M-1)$ and $\ubar{\beta}^{sem}(M)=\ubar{\beta}(M-1)=2/(M-2)$. Therefore, by the previous lemma, there exists $\epsilon>0$ such that, for $2/(M-1)<\beta< 2/(M-2)+\epsilon$, $G=G^+$.$\ \qedsymbol$
\end{enumerate}

The same procedure may be applied to study the dependence of $G^+$ on $\omega=(\omega_1,...,\omega_M)$. Suppose that $\omega_i> 1,\forall i$ (this condition is not restraining at all, since any case may be reduced to this one by a simple scaling). Define
\ben
\ubar{\eta}=-\min_{1\le i\le M}{1/(\omega_i-1)}.
\een

For $\eta>\ubar{\eta}$, consider the system
\ben
\Delta u_i - (1+\eta(\omega_i-1)) u_i + \sum_{j=1}^M k_{ij}|u_j|^{p+1}|u_i|^{p-1}u_i  =0, \quad i=1,...,M.
\een
Now we write the dependence on $\eta$ as a superscript. If one defines
\ben
\mathcal{I}^\eta=\left(\inf_{J(U)=1} I^\eta(U)\right)^{\frac{p+1}{p}},
\een
and, for any $X\subset \{1,...,M\}$,
\ben
(\mathcal{I}^\eta)^X:=\left(\inf_{J(U)=1, U_i=0, i\notin X} I^\eta(U)\right)^{\frac{p+1}{p}}, \quad (\mathcal{I}^\eta)^{sem}:=\min_{X\subsetneq \{1,...,M\}} (\mathcal{I}^\eta)^X,
\een
we have once again $G=G^+$  iff $\mathcal{I}^\eta< (\mathcal{I}^\eta)^{sem}$. As before, we may show that 

\begin{prop}
For any $X\subset \{1,...,M\}$, the mapping $\eta\mapsto (\mathcal{I}^\eta)^X, \eta> \ubar{\eta}$, is continuous. In particular, $\mathcal{I}^\eta$ and $(\mathcal{I}^\eta)^{sem}$ are continuous with respect to $\eta$.
\end{prop}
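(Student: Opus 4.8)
The plan is to repeat the scheme of the previous proposition, taking advantage of one decisive simplification: in the present situation the constraint $J(U)=1$ does \emph{not} depend on $\eta$ (only the coefficients $1+\eta(\omega_i-1)$ in $I^\eta$ do), so there is no need to rescale competitors to restore admissibility — one simply tests $I^\eta$ and $I^{\eta_0}$ against the same functions. As in the $\beta$-case, it suffices to treat $X=\{1,\dots,M\}$, since the $X$-restricted problem is exactly the analogous problem for the subsystem indexed by $X$; and if the corresponding $X$-restricted constraint set is empty, then $(\mathcal{I}^\eta)^X\equiv+\infty$ on $(\ubar{\eta},\infty)$ and there is nothing to prove, so we may assume it is nonempty, in which case $(\mathcal{I}^\eta)^X$ is finite for every $\eta>\ubar{\eta}$ (the functional is nonnegative there and finite on any admissible point).

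I would first record two elementary facts, uniform on an arbitrary compact subinterval $[\ubar{\eta}+\delta,M_0]\subset(\ubar{\eta},\infty)$. Since $\eta>\ubar{\eta}$ forces $1+\eta(\omega_i-1)>0$ for all $i$, the continuous function $\eta\mapsto\min_i(1+\eta(\omega_i-1))$ is positive on $(\ubar{\eta},\infty)$, hence bounded below by some $c_\delta>0$ on $[\ubar{\eta}+\delta,M_0]$; consequently $I^\eta$ is uniformly coercive there, $I^\eta(U)\ge\min(1,c_\delta)\|U\|_{H^1}^2$. Secondly, for every $U$ one has $|I^\eta(U)-I^{\eta_0}(U)|=|\eta-\eta_0|\sum_{i=1}^M(\omega_i-1)\|u_i\|_2^2\le C|\eta-\eta_0|\,\|U\|_{H^1}^2$. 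Now fix $\eta_0>\ubar{\eta}$ and $\eta_n\to\eta_0$, which we may assume to lie in such a compact subinterval. For the upper bound, for each $\varepsilon>0$ pick $U$ with $J(U)=1$ and $I^{\eta_0}(U)\le(\mathcal{I}^{\eta_0})^{p/(p+1)}+\varepsilon$; since $U$ is admissible for every $\eta_n$, the second fact gives $(\mathcal{I}^{\eta_n})^{p/(p+1)}\le I^{\eta_n}(U)=I^{\eta_0}(U)+o(1)$, so $\limsup_n(\mathcal{I}^{\eta_n})^{p/(p+1)}\le(\mathcal{I}^{\eta_0})^{p/(p+1)}+\varepsilon$, and $\varepsilon\to0$. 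For the lower bound, choose near-minimizers $U_n$ with $J(U_n)=1$ and $I^{\eta_n}(U_n)\le(\mathcal{I}^{\eta_n})^{p/(p+1)}+1/n$; by the upper bound just proved the right-hand side stays bounded, so by uniform coercivity $\{U_n\}$ is bounded in $H^1$, and then the second fact yields $I^{\eta_0}(U_n)\le I^{\eta_n}(U_n)+o(1)$. As $U_n$ is admissible for $\eta_0$, $(\mathcal{I}^{\eta_0})^{p/(p+1)}\le I^{\eta_0}(U_n)\le(\mathcal{I}^{\eta_n})^{p/(p+1)}+o(1)$, whence $(\mathcal{I}^{\eta_0})^{p/(p+1)}\le\liminf_n(\mathcal{I}^{\eta_n})^{p/(p+1)}$. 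Combining the two bounds gives convergence of $(\mathcal{I}^{\eta_n})^{p/(p+1)}$, hence of $\mathcal{I}^{\eta_n}$ since $t\mapsto t^{(p+1)/p}$ is continuous; this is continuity at $\eta_0$. The ``in particular'' assertion follows at once, since $\mathcal{I}^\eta=(\mathcal{I}^\eta)^{\{1,\dots,M\}}$ and $(\mathcal{I}^\eta)^{sem}$ is a minimum of finitely many continuous functions.

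I do not expect any genuinely hard step here: the fact that the constraint carries no $\eta$-dependence eliminates the rescaling bookkeeping that was present in the proof of the preceding proposition. The only point needing a little care is the uniform $H^1$-bound on the near-minimizers in the lower-bound step, which is precisely why one must confine $\eta_n$ to a compact subinterval of $(\ubar{\eta},\infty)$ on which $I^\eta$ is uniformly coercive. One may, if preferred, work with genuine minimizers instead of near-minimizers — these exist by the variational characterization of ground-states together with assumption \eqref{exist} — and the argument is unchanged; right-continuity at the endpoint $\eta=\ubar{\eta}$ could also be obtained exactly as in the previous proposition, but it plays no role in what follows.
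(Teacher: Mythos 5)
Your proof is correct and takes essentially the approach the paper intends: the paper omits this proof entirely (``As before, we may show that\dots''), deferring to the $\beta$-continuity argument, and your adaptation is the natural way to carry it over. The one genuine adjustment you make — dropping the rescaling of competitors (unneeded since the constraint $J(U)=1$ is $\eta$-independent) and instead invoking uniform coercivity of $I^\eta$ on compact subintervals of $(\ubar{\eta},\infty)$ to get the $H^1$-bound on near-minimizers — is exactly the right one, and the reduction to $X=\{1,\dots,M\}$ and the finite-minimum argument for $(\mathcal{I}^\eta)^{sem}$ are handled as in the paper.
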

%

\noindent\textit{Proof of corollary \ref{diagonalpequena}.}
First of all, notice that, if $U$ is a ground-state, $V=b^{1/2p}U$ is a ground-state of 
\begin{equation}\label{perturbado}
 \Delta u_i - \omega_i u_i + \sum_{j=1}^M \frac{k_{ij}}{b}|u_j|^{p+1}|u_i|^{p-1}u_i=0, \quad i=1,...,M.
\end{equation}
Therefore, we may consider that $b=1$ and that the diagonal terms are small. Then such a system may be seen as a $\beta$-perturbation of
\begin{equation}
 \Delta u_i -  \omega_i u_i + \sum_{j=1, i\neq j}^M |u_j|^{p+1}|u_i|^{p-1}u_i=0, \quad i=1,...,M.
\end{equation}
We note by $\mathcal{I}(\omega_1,...,\omega_M)$ the corresponding ground-state action level. By the monotonicity properties, $\mathcal{I}(\omega_1,...,\omega_M)\le \mathcal{I}(\omega_M,...,\omega_M)$. On the other hand, if $\mathcal{I}^{sem}(\omega_1,...,\omega_M)$ is the semitrivial ground-state action level (that is, the lowest action among semitrivial bound-states), then $\mathcal{I}^{sem}(\omega_1,...,\omega_M)\ge\mathcal{I}^{sem}(\omega_1,...,\omega_1)$. The proof will be concluded if one proves that
\ben
\mathcal{I}^{sem}(\omega_1,...,\omega_1)>\mathcal{I}(\omega_M,...,\omega_M).
\een
Using a suitable scaling, we have
\ben\label{scaling}
\mathcal{I}^{sem}(\omega_1,...,\omega_1)=\omega_1^{\frac{2-p(N-2)}{2p}}\mathcal{I}^{sem}(1,...,1),\quad \mathcal{I}(\omega_M,...,\omega_M)=\omega_M^{\frac{2-p(N-2)}{2p}}\mathcal{I}(1,...,1).
\een

Therefore we only have to compare the ground-state and semitrivial ground-state actions levels for 
\begin{equation}\label{nperturbado}
 \Delta u_i -  u_i + \sum_{j=1, i\neq j}^M |u_j|^{p+1}|u_i|^{p-1}u_i=0, \quad i=1,...,M.
\end{equation}

We claim that any nontrivial ground-state of \label{nperturbado} must be of the form $U=(u_i)_{1\le i\le M}$, with $u_i=(M-1)^{-\frac{1}{2p}}u_0$, where $u_0$ is a scalar ground-state: by theorem \ref{caract},
\ben
u_i=a_iu_0, \quad a_i>0
\een
Inserting this information in the system, we have
\ben\label{sistemaai}
a_i^{p-1}\sum_{j\neq i} a_j^{p+1} =1, \ i=1,...,M
\een
Suppose, w.l.o.g., that $a_1<a_2$ and $a_1\le a_i, i\ge 2$. Then
\ben
\left\{\begin{array}{l}
\sum_{j\neq 1} a_j^{p+1} = a_1^{1-p}\\
\sum_{j\neq 2} a_j^{p+1} = a_2^{1-p}
\end{array}\right.
\een
This is a contradiction, since the first sum is larger than the second. Therefore $a_1=...=a_M=a$ and so
\ben
a^{2p}(M-1)=1,
\een
yielding the claim. Now compute the action for such a ground-state:
\ben\label{Itodosiguais}
I(U)=\frac{M}{(M-1)^{\frac{1}{p}}}I(u_0).
\een
Since the mapping $M\mapsto M/(M-1)^{\frac{1}{p}}$ is strictly decreasing and any semitrivial ground-state is a nontrivial ground-state for the same system with $M-L$ equations, for some $ L\in\nat$, we have
\ben
\mathcal{I}(1,...,1)=\frac{M}{(M-1)^{\frac{1}{p}}}I(u_0)
\een
\ben
\mathcal{I}^{sem}(1,...,1)=\min_{1\le L\le M-2}\left\{\frac{M-L}{(M-L-1)^{\frac{1}{p}}}I(u_0)\right\}=\frac{M-1}{(M-2)^{\frac{1}{p}}}I(u_0).
\een
The result follows from \eqref{scaling} and hypothesis \eqref{hipoteseomegasdiferentes}. $\qedsymbol$

\vskip10pt
\noindent\textit{Proof of proposition \ref{p=3}.}
First of all, using the characterization result, any nontrivial ground-state is of the form $U=(a_iu_0)_{1\le i\le 3}$, with $u_0$ a scalar ground-state. Inserting this formula in the system and writing $b_i=a_i^2$,
\ben
\left\{\begin{array}{l}
b_1(b_2^2 + b_3^2)=1\\
b_2(b_1^2+b_3^2)=1\\
b_3(b_1^2 + b_2^2)=1
\end{array}
\right.
\een
Suppose, w.l.o.g., that $b_1\neq b_3$. Multiply the first equation by $b_1$, the third by $b_3$ and take the difference. Then 
\ben
b_2^2(b_1^2-b_3^2)=b_1-b_3, \ i.e.,\ b_2^2(b_1+b_3)=1.
\een
Define $x=b_1/b_2$, $y=b_3/b_2$. The above and the second equation imply
\ben
x^2+y^2=1/b_2^3 = x+y.
\een
Now divide the system by $b_2^3$ and take the difference between the two last equations:
\ben
x^2+y^2=y(x^2+1).
\een
Hence $x=yx^2$ and so $y=1/x$. Therefore $x^4+1=x^3+x$. One easily checks that $x=1$ is the only positive solution to this equation. Therefore $x=y$ and $b_1=b_3$, which is absurd.
Therefore $b_1=b_2=b_3$ and so $a_i=a=:2^{-1/6}$.

We observe that $V=(1,1,0)u_0$ is also a bound-state. Now compute the action of $U$ and $V$:
\ben
I(U)=\frac{3}{2^{1/6}} I(u_0) > 2I(u_0) = I(V).
\een
This means that $U$ cannot be a ground-state, which ends the proof. $\qedsymbol$
%
\end{section}
\begin{section}{System (M-NLS'): Mandel's characteristic function}

Once again, consider system \eqref{MNLSbeta}: for a given nonempty symmetric subset $P$ of $\{1,...,M\}^2$ and $\beta\in\real$,

\ben
\Delta u_i - \omega_i u_i + \sum_{(i,j)\notin P} k_{ij}|u_j|^{p+1}|u_i|^{p-1}u_i + \sum_{(i,j)\in P} \beta k_{ij}|u_j|^{p+1}|u_i|^{p-1}u_i =0, i=1,...,M
\een

For the sake of simplicity, we suppose that $k_{ij}>0, \forall (i,j)\in P$. We define
\ben
J_P(U)=\sum_{(i,j)\in P} k_{ij}\int |u_i|^{p+1}|u_j|^{p+1}, \quad J_{NP}(U)=\sum_{(i,j)\notin P} k_{ij}\int |u_i|^{p+1}|u_j|^{p+1}.
\een

As before, we shall place a subscript $\beta$ whenever a solution, function or set depends on $\beta$. Suppose that $G_\beta^+\neq \emptyset$. Therefore there exists $U_\beta$ nontrivial bound-state such that
\ben
I(U_\beta)\le \mathcal{I}_\beta.
\een
Since $I(U_\beta)=J_\beta(U_\beta)$,
\ben
(\mathcal{I}_\beta^{sem})^p\ge \frac{I(U_\beta)^{p+1}}{J_\beta(U_\beta)}, \mbox{ i.e. } J_{NP}(U_\beta) + \beta J_P(U_\beta)\ge I(U)^{p+1}(\mathcal{I}_\beta^{sem})^{-p}.
\een
Hence
\ben
\beta\ge \frac{I(U_\beta)^{p+1}(\mathcal{I}_\beta^{sem})^{-p}-J_{NP}(U_{\beta})}{J_P(U_\beta)}=:B_\beta(U_\beta).
\een
Define
\ben
\hat{\beta} = \inf_{U\in (H^1(\real^N)\setminus\{0\})^M} B_\beta(U).
\een
Then $\beta<\hat{\beta}$ clearly implies $G_\beta^+=\emptyset$. Moreover, it is not hard to check that, if $\beta>\hat{\beta}$, $G_\beta^+=G_\beta$. Also, if $\hat{\beta}=\beta$, $G_\beta\setminus G_\beta^+\neq\emptyset$.

Let us look deeper into the properties of $\hat{\beta}$. Suppose, for instance, that $\hat{\beta_0}>\beta_0$. Then
\ben
\frac{I(U)^{p+1}}{J_{\hat{\beta}_0}(U)}>(I_{\beta_0}^{sem})^p,\quad \forall U: J_P(U)\neq 0.
\een
Take $\beta_0\le \beta\le \hat{\beta}_0$. If $U_{\beta}^{sem}$ (the best semitrivial bound-state) satisfies $J_P(U_{\beta}^{sem})\neq 0$, then
\ben
(I_{\beta}^{sem})^p=\frac{I(U_{\beta}^{sem})^{p+1}}{J_{\beta}(U_{\beta}^{sem})}\ge \frac{I(U_{\beta}^{sem})^{p+1}}{J_{\hat{\beta}_0}(U_{\beta}^{sem})}>(I_{\beta_0}^{sem})^p,
\een
which is absurd, by the monotonicity properties. Therefore $J_P(U_{\beta}^{sem})=0$, for all $\beta\in [\beta_0,\hat{\beta}_0]$. In turn, by the definition of $\mathcal{I}_\beta^{sem}$, we see that it is constant in this interval and so the function $\beta\mapsto \hat{\beta}$ is constant on $[\beta_0,\hat{\beta}_0]$. Moreover, since $\beta<\hat{\beta}_0=\hat{\beta}$, $G_{\beta}^+=\emptyset$ for all $\beta\in [\beta_0,\hat{\beta})$.

So condition $\hat{\beta}>\beta$ has more implications than the simple dichotomy seen in \cite{mandel}. Precisely because of this fact, is not as powerful when studying the nonemptiness of $G^+$ as one would desire: for example, if $P$ contains all diagonal terms, one has $\beta\ge\hat{\beta}$ for all $\beta$.
\vskip10pt
$\noindent$\textit{Proof of proposition \ref{propmandel1}.}
Suppose that $U$ is a ground-state for a sequence $\beta_n\to\infty$. The hypothesis on $P$ implies that
\ben
\frac{I(U)^{p+1}}{J_\beta(U)}=\frac{I(U)^{p+1}}{J(U)}.
\een
Since, for each $\beta_n$, $U$ is the semitrivial bound-state with the lowest action, this implies that $\hat{\beta}_n=\hat{1},\forall n$. Taking $n_0$ large, $\beta_{n_0}>\hat{1}=\hat{\beta}_{n_0}$, which implies that $G=G^+$, contradicting $U\in G$.$\qedsymbol$
\vskip10pt
$\noindent$\textit{Proof of proposition \ref{propmandel2}}.
Through a normalization, one may assume $\mu=1$. From \cite{mandel}, the property is true for $M=2$. We now proceed by induction: suppose that the result is true for $M-1$ equations. Then there exists $\beta_{M-1}$ and $U_0$ with only one nonzero component such that 
\ben\label{447}
\frac{I(U)^{p+1}}{J_{\beta}(U)}\ge \frac{I(U_0)^{p+1}}{J_{\beta}(U_0)}=\frac{I(U_0)^{p+1}}{J(U_0)}=I(U_0)^p, \forall U\mbox{ semitrivial }, \forall 0<\beta<\beta_{M-1}.
\een
Consider the function $\beta\mapsto\hat{\beta}$. Since $U_0$ has only one nonzero component, $\hat{\beta}$ is constant on $(0,\beta_{M-1})$. Take any $U$ such that $J_P(U)\neq 0$. W.l.o.g., assume that the last component has the largest $L^{2p+2}$ norm. For each $1\le i\le M$, define

\ben
r_i=\frac{\|u_i\|_{2p+2}}{\|u_M\|_{2p+2}}\le 1,\ V_i=((v_i)_1,...,(v_i)_M),\ (v_i)_j=u_i\delta_{ij}
\een
Then, using \eqref{447} and $J_\beta(V_i)=J(V_i)$,
\begin{align*}
&\frac{I(U)^{p+1}(I(U_0))^{-p}-J_{NP}(U)}{J_P(U)}=
\frac{\left(\sum_{i=1}^M I(V_i)\right)^{p+1}I^{-p}(U_0) - \sum_{i=1}^M J(V_i)}{J_P(U)}\\
=\ &\frac{\left(\frac{I(V_M)}{J(V_M)^{1/(p+1)}} + \sum_{i=1}^{M-1} r_i^2 \frac{I(V_i)}{J(V_i)^{1/(p+1)}}\right)^{p+1}I^{-p}(U_0) - 1-\sum_{i=1}^{M-1}r_i^{2p+2}}{\frac{J_P(U)}{J(V_M)}}
\\ \ge\ & \frac{(1+\sum_{i=1}^{M-1}r_i^2)^{p+1}-1-\sum_{i=1}^{M-1}r_i^{2p+2}}{2\sum_{i=1}^{M-1}r_i^{p+1} + \sum_{i,j=1}^{M-1}r_i^{p+1}r_j^{p+1}}=:g(r_1,...,r_{M-1})
\end{align*}
Since $p\ge 1$, $g$ is bounded below over the set $[0,1]^{M-1}$ by a constant $m>0$. Hence $B_\beta(U)\ge m>0, \forall U$. Therefore, taking $\beta_M=\min\{\beta_{M-1}, m\}$, we see that $\hat{\beta}\ge m>\beta$, for $0<\beta<\beta_M$. The properties of $\hat{\beta}$ imply that the result is true for $M$ equations. $\qedsymbol$

\end{section}

\begin{section}{Examples}

\begin{exemplo}
Consider $M=3$, $p=1$ and suppose that the coefficient matrix $K$ is of the form
\ben
K=\left[\begin{array}{ccc}
0 & a & b\\
a & 0 & c\\
b & c & 0
\end{array}\right],\quad a\le b\le c\in\real\setminus\{0\}.
\een
Now one must divide in several cases:
\bi
\item $c<0$: in this case, the condition for the existence of ground-states is not verified and so there are no ground-states;
\item $c>0, b<0$: applying theorem 6 of \cite{simao}, any ground-state satisfies either $u_1=0$ or $u_2,u_3=0$. The second possibility implies that $u_1$ satisfies $-\Delta u_1 + u_1=0$, which is impossible. Therefore $u_1=0$. Since $U=(a_iu_0)_{1\le i\le 3}$, a direct substitution on the system gives $a_2=a_3=c^{-1/2}$.
\item $b>0$: suppose that $U$ is a nontrivial ground-state. Then, inserting the characterization formula in the system (M-NLS), we obtain
\ben\label{sistemaX}
KX=(1,1,1)^T,\quad X=(a_1^2,a_2^2,a_3^2)^T.
\een
The determinant of $K$ is $2abc$. Now, using Cramer's rule,
\ben
a_1^2= \frac{a+b-c}{2ab}, \ a_2^2=\frac{a+c-b}{2ac},\ a_3^2=\frac{b+c-a}{2bc}.
\een
This implies that $(a+b-c)a>0$ and $(a+c-b)a>0$. Now, if $V$ is a semitrivial ground-state, using the characterization and the fact that $c\ge a,b$,
\ben
V=\left(0,\frac{1}{\sqrt{c}},\frac{1}{\sqrt{c}}\right)u_0.
\een
Now, comparing the actions of these two solutions, the condition for the existence of nontrivial ground-states is 
\ben
\frac{-a^2-b^2-c^2 + 2ab + 2bc + 2ac}{2ab}\le 2
\een
which, for $a>0$, simplifies to
\ben\label{314}
2c(a+b)\le (a+b)^2+c^2, i.e., (a+b-c)^2\ge 0.
\een
Therefore, for $a>0$, one has the following:
\bi
\item if $a+b\le c$, $G^+=\emptyset$, since system \eqref{sistemaX} has no positive solutions. This implies
\ben
G=\left\{\left(0,\frac{1}{\sqrt{c}},\frac{1}{\sqrt{c}}\right)u_0: u_0\in G_{(1-NLS)}\right\}.
\een

\item if $a+b>c$, then
\ben
G=\left\{\left(\sqrt{\frac{a+b-c}{2ab}}, \sqrt{\frac{a+c-b}{2ac}}, \sqrt{\frac{b+c-a}{2bc}}\right)u_0: u_0\in G_{(1-NLS)}\right\}.
\een
\ei

For $a<0$, inequality \eqref{314} is reversed and strict, hence $G^+=\emptyset$. 
\ei
Hence the necessary and sufficient condition for the existence of nontrivial ground-states with $a\le b\le c\in\real$ is $a+b>c$.
\end{exemplo}

\begin{exemplo}
Consider $M=3$, $p=1$ and suppose that the coefficient matrix $K$ is of the form
\ben
K=\left[\begin{array}{ccc}
1 & a & b\\
a & 1 & c\\
b & c & 1
\end{array}\right],\quad 1\ll a\le b\le c
\een
The previous example may be seen as a limit when $a,b,c$ are very large. With some computations, one derives the following:
\bi
\item The possible semitrivial ground-state is given by
\ben
V=\left(0,\frac{1}{\sqrt{1+c}},\frac{1}{\sqrt{1+c}}\right).
\een
\item The possible nontrivial ground-state, $U=(a_i u_0)_{1\le i\le 3}$, is given by
$$
a_1^2=\frac{1+(a+b)c - a-b-c^2}{1+2abc-a^2-b^2-c^2},\ a_2^2=\frac{1+(a+c)b - a-c-b^2}{1+2abc-a^2-b^2-c^2}, 
$$
\ben
\ a_3^2=\frac{1+(c+b)a - c-b-a^2}{1+2abc-a^2-b^2-c^2}.
\een
We assume that $a,b,c$ are such that all numerators and denominators above are positive. Notice that this is true for $a,b,c$ large enough and $a+b>c$.
\ei

As in the previous example, if one compares the corresponding action levels, one has $G^+\neq\emptyset$ iff
\ben
0\le (a+b-c)(a+b-c-2).
\een
Since we assumed that $a+b>c$, the condition is simply $a+b\ge c+2$. We see that, even for systems where the couplings $k_{ij}, i\neq j$, are large comparing to the diagonal terms $k_{ii}$, one may have $G^+=\emptyset$. This does not go against the conclusion of corollary \ref{diagonalpequena} and the perturbation arguments: the problem here is that $a, b$ and $c$ are not close to each other. This example shows that, in order for one to have $G^+\neq\emptyset$, one must take into account the relation between coupling coefficients.
\end{exemplo}

\begin{exemplo}
Consider system (3-NLS), $p=1$ and the coefficient matrix
\ben
K=\left[\begin{array}{ccc}
0 & b & 1\\
b & 0 & 2\\
1 & 2 & \mu
\end{array}\right],\quad  b>0, \mu\in\real.
\een
Using the characterization, everything is reduced to the study of the proportionality constants $a_1, a_2$ and $a_3$. For the sake of simplicity, $x=a_1^2$, $y=a_2^2$, $z=a_3^2$. It is now a simple calculation to obtain the following:
\bi
\item Semitrivial A: The possible ground-state with $x=0$ satisfies $y=(2-\mu)/4$, $z=\frac{1}{2}$. This solution only exists if $2>\mu$.
\item Semitrivial B: Analogously, the possible ground-state with $x=0$ satisfies $x=1-\mu$, $z=1$. This solution only exists if $1>\mu$.
\item Semitrivial C: The possible ground-state with $z=0$ satisfies $x=y=1/b$;
\item Semitrivial D: If $x=y=0$, then $z=1/\mu$. This solution only exists if $\mu>0$;
\item Nontrivial E: For the possible nontrivial ground-state,
\ben
x=\frac{\mu b - 2b +2}{b(\mu b-4)}, y=\frac{\mu b - b -1}{b(\mu b-4)},\quad z=\frac{b-3}{\mu b-4}.
\een
This solution exists only if $b>3$ and $\mu>2(b-1)/b$ or if $b<3$ and $\mu<2(b-1)/b$.
\ei
The action for each of these solutions is (up to a constant)
\ben
A: \frac{4-\mu}{4},\quad B:2-\mu,\quad C=2/b,\quad D: 1/\mu \quad  E:\frac{b^2+(2\mu-6)b+1}{b(\mu b-4)}
\een

Now we compare the various actions, whenever the solutions exist:
\begin{enumerate}
\item First of all, $A$ is always lower than $B$ so one may discard this solution;
\item The solution $D$ is best if $\mu>2, b/2$;
\item A tideous computation shows that $E$ is the ground-state if $b<3$ and $\mu<2(b-1)/b$;
\item In the remaining area, $A$ is better than $C$ if $2>\mu>4-8/b$.
\end{enumerate}

Intersecting these comparisons with the domains where each solution exists, we obtain diagram 2, which is already revealing of the complexity of this problem.
\begin{figure}[h]\label{fig2}
\centering
\includegraphics[width=10cm, height=7cm]{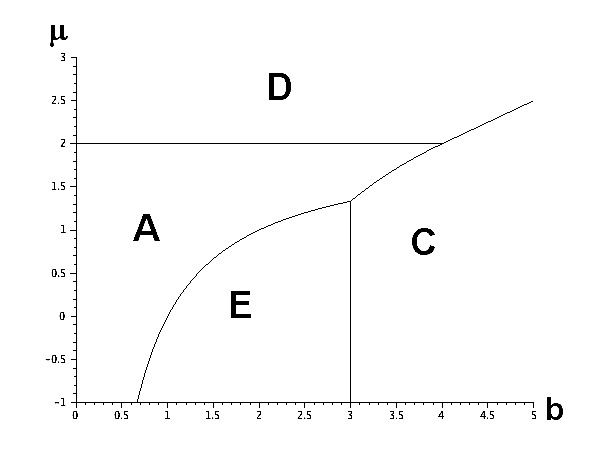}
\caption{Regions of the $b-\mu$ plane where each solution is a ground-state.}
\end{figure}

Several remarks are necessary:
\begin{enumerate}
\item First of all, we see that, for $b<3$, when $-\mu$ is very large, the ground-state is nontrivial. Moreover, if $b>3$, no value of $\mu$ produces nontrivial ground-states;
\item One might think that some solutions (for example, the nontrivial one), if they exist, would always have minimal action. However, the reader may check that this is not true for this system;
\item It is natural that one never has $u_1\neq 0$ and $u_2=0$, since the coupling coefficients associated with the second component are larger than the coefficients associated with the first component. In fact, this is a consequence of the monotonicity with respect to the coefficient matrix;
\item Take, for example, $b=4$. One sees that the mapping $\mu\mapsto \hat{\mu}$ is constant up to $\mu=2$ and $\hat{\mu}=\mu$ for $\mu>2$. On the other hand, if one sets $\mu=1$, one observes that mapping $b\mapsto \hat{b}$ is constant up to $b=2$, $b>\hat{b}$ for $2<b<3$ and $\hat{b}=b$ for $b>3$. In any case, one may observe that these mappings are continuous.
\end{enumerate}

\end{exemplo}
\end{section}
\begin{section}{Further comments}

\indent One of the main ideas that it should be clear at the end of this work is that the system (M-NLS') for $M=2$ has a much simpler structure than the case $M=3$. The examples we presented put in evidence the complexity of this problem. Using the characterization theorem, one should build more examples to see which properties one may expect or not. It would be especially interesting to build a nontrivial example for $p>1$ and see how does the set $G^+$ evolve as a function of the parameters.

Another problem related with system (M-NLS') is the existence of bound-states with the lowest action among \textit{nontrivial} bound-states. This is not trivial at all, especially because it lacks a suitable variational formulation. Some attempts, using generalized Nehari manifolds, have proven the existence of such bound-states. It would be interesting to see if one may extend the characterization theorem to this case.

One of the reasons for which ground-states are an interesting object to study is because they give rise to periodic solutions for (1.1). In this context, one may study the stability of these solutions. It is known (see \cite{cazenave}, \cite{simao2}, \cite{maiamontefuscopellacci2}) that the variational properties of the ground-states influence deeply their stability. We would like to point out the following: using the characterization theorem, we see that there exists a bijection between the set of ground-states and the set of solutions of a constrained maximization problem over $\real^M$. Now consider \textit{local} solutions of the same constrained problem in $\real^M$. These solutions give rise to bound-states, which may or may not be ground-states. However, the local maximization property should be enough to prove results on local stability.

%
%

\end{section}
\begin{section}{Acknowledgements}
The author was partially supported by Fundação para a Ciência e Tecnologia, through the grants SFRH/BD/96399/2013 and UID/MAT/04561/2013. The author would like to thank Hugo Tavares for some important suggestions.
\end{section}

\end{document}